\renewenvironment{proof}[1][Proof]{\textbf{#1.} }
{\ \rule{0.5em}{0.5em}}
\newtheorem{theorem}{Theorem}
\newtheorem{lemma}{Lemma}
\newtheorem{remark}{Remark}
\begin{document}

\title[On properties of the sets   \dots]
{On properties of the sets of positively curved Riemannian metrics on generalized Wallach spaces}

\author{N.\,A.~Abiev}
\address{N.\,A.~Abiev \newline
Institute of Mathematics NAS KR, Bishkek, prospect Chui, 265a, 720071, Kyrgyz Republic}
\email{abievn@mail.ru}

\begin{abstract}
Sets related to positively curved invariant Riemannian metrics on generalized Wallach spaces
are considered.
The problem arises in studying of the evolution of such metrics under the normalized Ricci flow equation.
For Riemannian metrics of the Wallach spaces
$\operatorname{SU}(3)/T_{\max}$, $\operatorname{Sp(3)}/ \left(\operatorname{Sp(1)}\right)^3$ and $F_4/\operatorname{Spin(8)}$
which admit positive sectional curvature and belong to a given
invariant  surface $\Sigma$ of the normalized Ricci flow
we established that they form a set $\Sigma S$ bounded by three connected and pairwise disjoint
regular space curves such that
each of them approaches two others asymptotically at infinity.
Analogously, we proved that for all generalized Wallach spaces
the set of Riemannian metrics $\Sigma R$
which belong to $\Sigma$ and admit positive Ricci curvature
is bounded by three curves each consisting of two connected components
as regular curves. Intersections and asymptotical behaviors of these
components were studied as well.

\vspace{2mm} \noindent Key words and phrases: Wallach space, generalized Wallach space,
Riemannian metric, normalized Ricci flow, sectional curvature, Ricci curvature, invariant set.

\vspace{2mm}

\noindent {\it 2010 Mathematics Subject Classification:} 53C30, 53C44, 37C10, 34C05.
\end{abstract}

\maketitle

\section*{Introduction}\label{vvedenie}

The paper is devoted to the study of structural properties of two important sets
responsible for positivity of the sectional and the Ricci curvatures
of invariant Riemannian metrics on the Wallach spaces and generalized Wallach spaces.
The Wallach spaces
\begin{equation} \label{SWS}
W_6:=\operatorname{SU}(3)/T_{\max},
\ \ W_{12}:=\operatorname{Sp(3)}/\operatorname{Sp(1)}\times \operatorname{Sp(1)}
\times \operatorname{Sp(1)},
\ \ W_{24}:=F_4/\operatorname{Spin(8)}.
\end{equation}
are well-known and admit invariant Riemannian metrics of positive sectional curvature (see~\cite{Wal} for details).
As for generalized Wallach space, firstly, recall its definition and basic properties
(see~\cite{Nikonorov2, Nikonorov1}).
Let $G/H$ be a~homogeneous almost effective compact space  with  a (compact) semisimple connected Lie group $G$ and its closed subgroup $H$. Denote by~$\mathfrak{g}$ and~$\mathfrak{h}$ the~corresponding Lie algebras of~$G$ and~$H$.
Then $[\boldsymbol{\cdot}\,,\boldsymbol{\cdot}]$ is a corresponding~Lie bracket
of~$\mathfrak{g}$ whereas  $B(\boldsymbol{\cdot}\,,\boldsymbol{\cdot})$ is
the~Killing form of~$\mathfrak{g}$. Note that $\langle\boldsymbol{\cdot}
\,,\boldsymbol{\cdot}\rangle=-B(\boldsymbol{\cdot}\,,\boldsymbol{\cdot})$
is a~bi-invariant inner product on~$\mathfrak{g}$.
In this way  invariant Riemannian metrics on~$G/H$ can be identified with $\operatorname{Ad}(H)$-invariant inner products on the
orthogonal complement~$\mathfrak{p}$ of~$\mathfrak{h}$ in~$\mathfrak{g}$ with respect to~$\langle\boldsymbol{\cdot}\,,\boldsymbol{\cdot}\rangle$.
Compact homogeneous spaces $G/H$ whose isotropy representation admits a decomposition into a direct sum
$\mathfrak{p}=\mathfrak{p}_1\oplus \mathfrak{p}_2\oplus \mathfrak{p}_3$ of three
$\operatorname{Ad}(H)$-invariant irreducible modules
$\mathfrak{p}_1$, $\mathfrak{p}_2$ and $\mathfrak{p}_3$
satisfying
$[\mathfrak{p}_i,\mathfrak{p}_i]\subset \mathfrak{h}$ for each $i\in\{1,2,3\}$
were called {\it three-locally-symmetric spaces}\/ according to~\cite{Lomshakov2}
or {\it generalized Wallach spaces}\/  in the terminology of~\cite{Nikonorov1}.
The main  characteristic of these spaces is that every generalized Wallach space can be described by a triple of real parameters $a_i:=A/d_i\in (0,1/2]$, $i=1,2,3$,
where $d_i=\dim(\mathfrak{p}_i)$ and $A$ is some important positive constant
(see~\cite{Lomshakov2, Nikonorov2} for details).
It should be also noted that not every triple $(a_1,a_2,a_3)\in (0,1/2] \times (0,1/2] \times (0,1/2]$ corresponds to some generalized Wallach spaces.
An interesting fact is the fact that the Wallach spaces~\eqref{SWS}
are partial cases  $a_1=a_2=a_3=a$ of generalized Wallach spaces
with $a=1/6$, $a=1/8$ and $a=1/9$ respectively (see~\cite{AN}).

As noted above for a fixed bi-invariant inner product
$\langle\cdot, \cdot\rangle$ on the Lie algebra $\mathfrak{g}$ of the Lie group $G$,
any $G$-invariant Riemannian metric $\bold{g}$ on $G/H$ can  be determined by an $\operatorname{Ad} (H)$-invariant inner product
\begin{equation}\label{metric}
(\cdot, \cdot)=\left.x_1\langle\cdot, \cdot\rangle\right|_{\mathfrak{p}_1}+
\left.x_2\langle\cdot, \cdot\rangle\right|_{\mathfrak{p}_2}+
\left.x_3\langle\cdot, \cdot\rangle\right|_{\mathfrak{p}_3},
\end{equation}
where $x_1,x_2,x_3$ are positive real numbers
(a detailed exposition can be found in~\cite{Lomshakov2, Nikonorov2, Nikonorov4, Nikonorov1} and references therein).
In~\cite{Nikonorov2} the explicit expressions
$\operatorname{Ric}_{\bold{g}}=\left.{\bf r_1}\, \operatorname{Id} \right|_{\mathfrak{p}_1}+
\left.{\bf r_2}\, \operatorname{Id} \right|_{\mathfrak{p}_2}+
\left.{\bf r_3}\, \operatorname{Id} \right|_{\mathfrak{p}_3}$
and
$S_{\bold{g}}=d_1{\bf r_1}+d_2{\bf r_2}+d_3{\bf r_3}$
were derived
for the Ricci tensor $\operatorname{Ric}_{\bold{g}}$ and
the scalar curvature  $S_{\bold{g}}$  of the metric \eqref{metric}
on generalized Wallach spaces,
where
\begin{equation}\label{Ricci_princ}
{\bf r_i}:=\frac{1}{2x_i}+\frac{1}{2a_i}\left(\frac{x_i}{x_jx_k}-\frac{x_k}{x_ix_j}-
\frac{x_j}{x_ix_k} \right)
\end{equation}
are the principal Ricci curvatures,
$\{i,j,k\}=\{1,2,3\}$.

Knowing  $\operatorname{Ric}_{\bold{g}}$ and
$S_{\bold{g}}$  allowed us to initiate  in~\cite{AANS1}
the study of the normalized Ricci flow equation
\begin{equation}\label{ricciflow}
\dfrac {\partial}{\partial t} \bold{g}(t) = -2 \operatorname{Ric}_{\bold{g}}+ 2{\bold{g}(t)}\frac{S_{\bold{g}}}{n}
\end{equation}
introduced by R.~Hamilton in \cite{Ham}
on generalized Wallach spaces.
Since then studies related to this topic
were  continued in~\cite{Ab_kar3,  Ab1,   AANS2, Stat}
concerning
classifications of singular (equilibria) points of~\eqref{ricciflow}
 being Einstein metrics
and their bifurcations.
The authors of~\cite{Ab2,  Bat, Bat2} studied an interesting and quite complicated
surface of bifurcations
of~\eqref{ricciflow}
defined by a symmetric polynomial equation in three variables $a_1,a_2,a_3$ of degree~$12$.
In the sequel authors of~\cite{Ab7, AN}  considered
the evolution of positively curved Riemannian metrics under the influence
of~\eqref{three_equat} on an interesting
class of generalized  Wallach spaces with coincided parameters $a_1=a_2=a_3:=a\in (0,1/2)$
generalizing some results of~\cite{Bo, ChWal}.
In this case \eqref{ricciflow} can be  reduced to the following system
of three autonomous ordinary differential equations (see~\cite{AN}):
\begin{equation}\label{three_equat}
\frac{d}{dt}x_i(t)=f_i(x_1,x_2,x_3):=\frac{x_i}{x_j}+\frac{x_i}{x_k} + 2a\,\left(\frac{x_j}{x_k}+\frac{x_k}{x_j}-2\,\frac{x_i^2}{x_jx_k}\right)-2
\end{equation}
with $\{i,j,k\}=\{1,2,3\}$.

A relevant information on homogeneous spaces admitting positive sectional or (and) positive Ricci curvatures can be found in
\cite{AW, BB, Be, Wal, Wil, WiZi, XuWolf} and references therein.
In~\cite{AN} it was proved  that~\eqref{ricciflow} deforms
all generic metrics with positive sectional curvature into metrics with mixed sectional curvature on each Wallach space in~\eqref{SWS}
(Theorem~1  in~\cite{AN}) and all generic metrics  with positive Ricci curvature will be deformed into metrics with mixed Ricci curvature for  $W_{12}$ and $W_{24}$ (see Theorem~2 in~\cite{AN}), where given a metric is said to be generic if $x_i \ne x_j\ne x_k\ne x_i$ for $i,j,k\in \{1,2,3\}$.
According to Theorems 3 and 4 in~\cite{AN} and Theorem 3 in~\cite{Ab7}
positiveness of the Ricci curvature will be preserved
for all metrics at $a\in (1/6,1/2)$
and for a special kind of metrics
satisfying $x_k<x_i+x_j$ at $a=1/6$
(the equalities $x_k=x_i+x_j$ correspond to  K\"{a}hler metrics),
whereas all positively curved metrics will be deformed into metrics with mixed  Ricci curvature
if $a\in (0,1/6)$.
In~\cite{Ab7, AN} we used the description
\begin{equation}\label{vali}
\gamma_i:=(x_j-x_k)^2+2x_i(x_j+x_k)-3x_i^2, \quad \{i,j,k\}=\{1,2,3\},
\end{equation}
of the set of Riemannian metrics with positive sectional curvature on the Wallach spaces~\eqref{SWS}  given in~\cite{Valiev}
and the description
\begin{equation}\label{ricci}
\lambda_i:=x_jx_k+a\,\big(x_i^2-x_j^2-x_k^2\big), \quad \{i,j,k\}=\{1,2,3\},
\end{equation}
of the set of Riemannian metrics of positive Ricci curvature
on generalized Wallach spaces in the case  $a_1=a_2=a_3:=a$
given in~\cite{Nikonorov2}.
According to~\cite{Valiev} the system of inequalities $\gamma_i>0$  describes the set~$S$
of Riemannian metrics of positive sectional curvature on the Wallach spaces~\eqref{SWS}
and, analogously, solutions of the system of inequalities
$\lambda_i>0$ represents the set~$R$ of all Riemannian metrics of positive Ricci curvature
on every generalized Wallach spaces with $a\in (0,1/2)$ as follows from~\eqref{Ricci_princ}.

In~\cite{Ab7, AN} we relied on Maple evaluations and our visual observations concerning
structural properties of  surfaces and curves obtained from  \eqref{vali} and \eqref{ricci},
but  justifications of those properties were not included into the text of the papers.
Filling  these gaps  we initiated in~\cite{Ab24}.
The present paper continues that idea.
For $i=1,2,3$ denote by $\Gamma_i$ and $\Lambda_i$ the surfaces
in~$(0,+\infty)^3$
defined by the equations $\gamma_i=0$ and $\lambda_i=0$
respectively and introduce  space curves  $s_i:=\Sigma\cap \Gamma_i$,
$r_i:=\Sigma\cap \Lambda_i$ and sets $\Sigma S:=\Sigma \cap S$, $\Sigma R:=\Sigma \cap R$.
The main result of this paper is contained in the following two theorems.

\begin{theorem}\label{theo24_1}
The following assertions hold
for all indices with $\{i,j,k\}=\{1,2,3\}$:
\begin{enumerate}
\item
For each Wallach space in~\eqref{SWS}  the set $\Sigma S$
of invariant Riemannian metrics~\eqref{metric} which belong to the invariant set~$\Sigma$ of the system~\eqref{three_equat} and
admit positive sectional curvature
is  bounded by the pairwise disjoint regular space curves
$s_1$, $s_2$ and $s_3$ in~$\Sigma$ such that each~$s_k$ is connected and
can be parameterized  as the following
\begin{eqnarray*}\label{param_s2024}
x_k=t^{-1}\alpha^{-2}, \quad x_i= t\alpha, \quad x_j=\alpha,
\end{eqnarray*}
where
$\alpha=\alpha(t):=\left(\dfrac{-t-1+2\sqrt{t^2-t+1}}
{t(t-1)^2}\right)^{\frac{1}{3}}>0$ ~and~ $t\in (0,1)\cup (1,+\infty)$;
\item
Every invariant curve $I_k$ of the system~\eqref{three_equat}
given by the equations $x_i=x_j=p$, $x_k=p^{-2}$, $p>0$,
intersects the only border curve  $s_k$ at the unique
point with coordinates $x_i=x_j=p_0$, $x_k=p_0^{-2}$
approaching at infinity the other two curves~$s_i$ and $s_j$
as close as we like,  where $p_0=\sqrt[3]{6}/2$.
\end{enumerate}
\end{theorem}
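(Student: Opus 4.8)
The plan is to use that $\Sigma$ is the invariant unit-volume surface $\{x_1x_2x_3=1\}$ of the system \eqref{three_equat}, on which both the curves $s_k=\Sigma\cap\Gamma_k$ and the invariant curves $I_k$ lie. For assertion (1) I would first derive the parameterization. On $\Sigma$ introduce $t:=x_i/x_j$ and set $x_j=\alpha$, $x_i=t\alpha$; the volume constraint then forces $x_k=(x_ix_j)^{-1}=t^{-1}\alpha^{-2}$. Substituting into $\gamma_k=0$ and clearing denominators turns the defining equation into a quadratic in $u:=\alpha^3$, namely $t^2(t-1)^2u^2+2t(t+1)u-3=0$, whose discriminant simplifies through $(t+1)^2+3(t-1)^2=4(t^2-t+1)$ to give exactly the stated root. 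Reality and positivity of $\alpha$ on $(0,1)\cup(1,+\infty)$ reduce to $2\sqrt{t^2-t+1}>t+1$, which squares to $3(t-1)^2>0$; with $t(t-1)^2>0$ this yields $\alpha(t)>0$. Since $t=x_i/x_j$ is recovered from the point, the map $t\mapsto(x_i,x_j,x_k)$ is injective, and regularity follows because $x_j'=\alpha'$, while wherever $\alpha'=0$ one has $x_i'=\alpha>0$, so the tangent never vanishes.

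For connectedness I would examine the apparent singularity at $t=1$. Although the formula is $0/0$ there, a Taylor expansion shows the numerator vanishes to second order like $\tfrac{3}{4}(t-1)^2$ while the denominator behaves like $(t-1)^2$, so $\alpha^3\to 3/4$ and the moving point tends to $(p_0,p_0,p_0^{-2})$ with $p_0=(3/4)^{1/3}=\sqrt[3]{6}/2$. One checks this limit point lies on $s_k$, since $\gamma_k=p_0^{-4}(4p_0^3-3)=0$, so the two arcs $t<1$ and $t>1$ join there and $s_k$ is a connected simple regular curve. Pairwise disjointness rests on the factorization $\gamma_i-\gamma_j=-4(x_i-x_j)(x_i+x_j-x_k)$: a common zero of $\gamma_i$ and $\gamma_j$ forces $x_i=x_j$ or $x_k=x_i+x_j$, but substituting either back yields $\gamma_i=x_k^2>0$ or $\gamma_i=4x_ix_j>0$, contradicting $\gamma_i=0$. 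Finally $\Sigma S$ is the open subset of $\Sigma$ cut out by $\gamma_1,\gamma_2,\gamma_3>0$, so its boundary in $\Sigma$ is contained in $s_1\cup s_2\cup s_3$, which are therefore its bounding curves.

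For assertion (2), $I_k$ clearly lies on $\Sigma$. Substituting $x_i=x_j=p$, $x_k=p^{-2}$ into \eqref{vali} gives $\gamma_i=\gamma_j=x_k^2=p^{-4}>0$ for every $p$, so $I_k$ never meets $s_i$ or $s_j$; whereas $\gamma_k=4p^{-1}-3p^{-4}$ vanishes only at $p^3=3/4$, i.e. at the single point $p=p_0=\sqrt[3]{6}/2$, which is precisely the join point of $s_k$ found above. This establishes the unique intersection with the only border curve $s_k$.

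The remaining, and hardest, point is the asymptotic statement. I would parameterize $s_i$ by the cyclic analogue of the formula above (parameter $\tau=x_j/x_k$, amplitude $\beta(\tau)$) and expand as $\tau\to+\infty$; to leading order $x_i,x_j\sim\tau^{1/3}$ and $x_k\sim\tau^{-2/3}$, matching $I_k$ under $p=\tau^{1/3}$. The subtlety is that both curves escape to infinity, so equal leading terms are not enough and I must push the expansion one order further. Writing $\beta^3=\tau^{-2}\bigl(1-\tfrac{1}{4}\tau^{-2}+\cdots\bigr)$ one obtains $x_i-x_j=\tfrac{1}{4}\tau^{-5/3}+\cdots\to 0$ and $x_k-x_j^{-2}=-\tfrac{1}{4}\tau^{-8/3}+\cdots\to 0$, so the Euclidean distance between the point of $s_i$ and the corresponding point of $I_k$ tends to zero; the same computation with $i$ and $j$ interchanged handles $s_j$. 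Consistently, the infinite branches of $s_k$ itself approach $I_i$ and $I_j$ rather than $I_k$, so $I_k$ stays away from $s_k$ at infinity. The main obstacle is thus this second-order asymptotic matching, the one place where a genuine estimate, rather than an algebraic identity, is required.
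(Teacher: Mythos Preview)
Your argument is correct and tracks the paper's strategy closely for the core steps: the parameterization (substitute $x_i=t x_j$, $x_j=\alpha$, $x_k=(t\alpha^2)^{-1}$ into $\gamma_k=0$ and solve the resulting quadratic in $\alpha^3$), connectedness (the limit $\alpha^3\to 3/4$ at $t=1$), and the intersection $I_k\cap s_k$ (substitute $x_i=x_j=p$, $x_k=p^{-2}$ into the $\gamma$'s). Three technical points are handled differently. For pairwise disjointness the paper simply invokes a lemma from a companion paper, whereas you give a self-contained argument via the factorization $\gamma_i-\gamma_j=-4(x_i-x_j)(x_i+x_j-x_k)$. For regularity the paper shows $\Sigma$ and $\Gamma_i$ meet transversally by checking that $\nabla V$ and $\nabla\gamma_i$ are linearly independent, while you verify non-vanishing of the tangent to the explicit parameterization; both routes need a small extra remark at the join point $t=1$ where two coordinates coincide. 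The most substantive difference is the asymptotic claim: the paper records only that $\gamma_i\big|_{I_k}=p^{-4}\to 0$ as $p\to\infty$ and declares this sufficient, whereas you carry the expansion of the parameterization of $s_i$ one order further and show that the Euclidean distance to the matching point of $I_k$ tends to zero. Your treatment of this last point is more careful than the paper's and actually justifies the geometric statement rather than just the vanishing of the defining function.
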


The results of Theorem~\ref{theo24_1}
are illustrated in the left panel of Figure~\ref{s_curves},
where the curves $s_1$, $s_2$ and $s_3$ are depicted respectively
in red, teal and blue colors,
the invariant curves $I_1,I_2, I_3$ are all  yellow colored.

\begin{figure}[h]
\centering
\includegraphics[angle=0, width=0.45\textwidth]{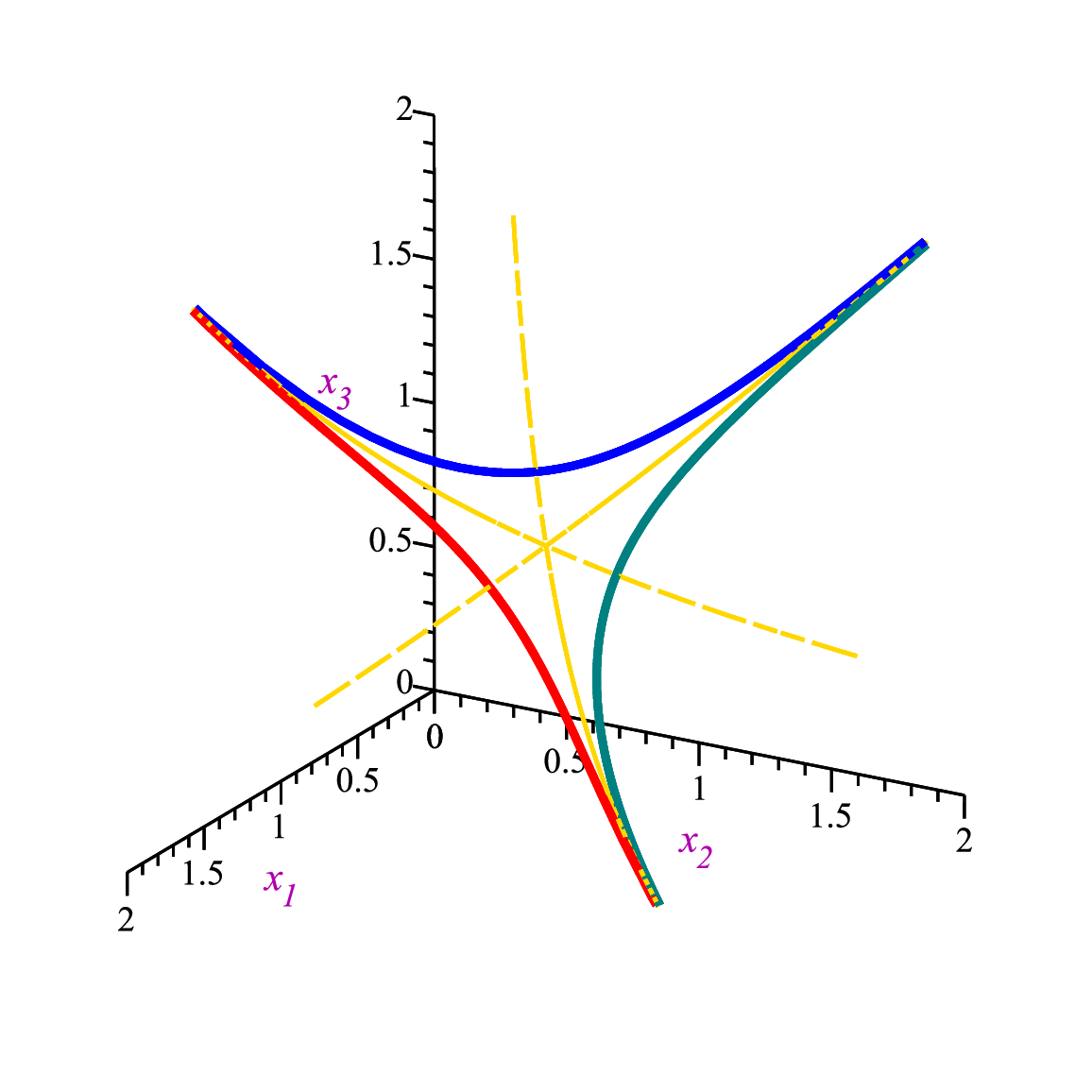}
\includegraphics[angle=0, width=0.45\textwidth]{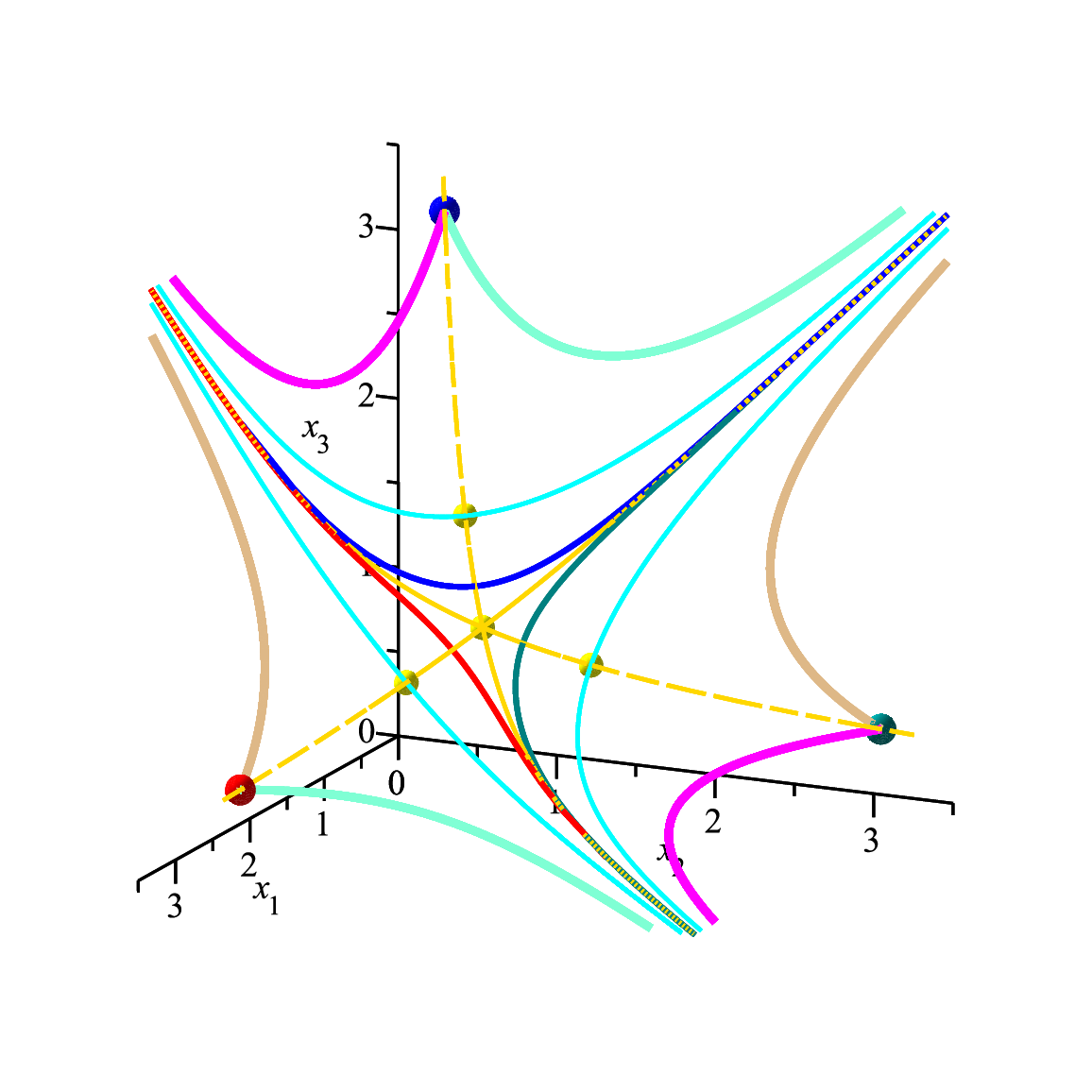}
\caption{The left panel: the  curves $s_1, s_2, s_3$; The right panel: the curves  $r_1, r_2, r_3$, $l_1, l_2, l_3$ and singular points  $\bold{o}_0, \bold{o}_1, \bold{o}_2,\bold{o}_3$  corresponding to $a=1/6$.}
\label{s_curves}
\end{figure}

\begin{theorem}\label{theo24_2}
The following assertions hold
for all indices with $\{i,j,k\}=\{1,2,3\}$:
\begin{enumerate}
\item
For every  generalized  Wallach space with $a_1=a_2=a_3=a\in (0,1/2)$
the set $\Sigma R$
of invariant Riemannian metrics~\eqref{metric} which belong to the invariant set~$\Sigma$ of the system~\eqref{three_equat} and
admit positive Ricci curvature
is  bounded by the space curves
$r_1$, $r_2$ and $r_3$ in~$\Sigma$ such that
each~$r_k$ consists of two  regular connected components $r_{ki}$ and $r_{kj}$
parameterized by equations
\begin{equation}\label{param_low}
x_k=t^{-1}\beta^{-2}, \quad x_i= t\beta, \quad x_j=\beta
\end{equation}
and
\begin{equation}\label{param_up}
x_k=t^{-1}\beta^{-2}, \quad x_j= t\beta, \quad x_i=\beta
\end{equation}
respectively, where
$\beta=\beta(t):=\left(t^4-a^{-1}t^3+t^2\right)^{-\frac{1}{6}}>0$
~and~ $t\in (0, a]$.
\item
Every pair of the curves $r_i$ and $r_j$  admits a unique common point $P_{ij}$ with  coordinates
$x_i=x_j=a^{\frac{1}{3}}$, $x_k=a^{-\frac{2}{3}}$
which belong to the components $r_{ij}$ and $r_{ji}$;
In addition, every invariant curve  $I_k$  of the system~\eqref{three_equat}
meets  the components $r_{ij}$ and $r_{ji}$ of $r_{i}$ and~$r_{j}$ exactly at the point $P_{ij}$
approaching their another components $r_{ik}$ and $r_{jk}$
at infinity as close as we like.
\item
For every  $a\in (0,1/2)$
all singular (equilibria) points
of the system~\eqref{three_equat}
belong to the set $\Sigma R$.
\item
 K\"ahler metrics $x_k=x_i+x_j$ of generalized Wallach spaces with  $a=1/6$  form in $\Sigma$ separatrices $l_k$ of saddles of~\eqref{three_equat} which  can be defined by parametric equations
\begin{equation}\label{kaler_curve}
x_k=t^{-1}\phi^{-2}, \quad x_i=t\phi, \quad x_j= \phi,
\end{equation}
where $\phi=\phi(t):=(t^2+t)^{-1/3}$ ~and~ $t>0$.
\end{enumerate}
\end{theorem}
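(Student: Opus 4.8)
The plan is to push everything onto the invariant surface $\Sigma=\{x_1x_2x_3=1\}$ and observe that, once the product constraint is imposed, each of the loci $\lambda_k=0$, the K\"ahler locus, and the invariant curves $I_k$ become one-parameter families. For part~(1), I would parametrize $\Sigma$ near $\Lambda_k$ by $t=x_i/x_j$ and $\beta=x_j$, so that $x_i=t\beta$, $x_j=\beta$, $x_k=(t\beta^2)^{-1}$ automatically satisfies $x_1x_2x_3=1$, matching \eqref{param_low}. Substituting into $\lambda_k=x_ix_j+a(x_k^2-x_i^2-x_j^2)=0$ and clearing denominators produces an equation that is \emph{linear} in $\beta^6$, whose unique solution is exactly $\beta=(t^4-a^{-1}t^3+t^2)^{-1/6}$; reality and positivity of $\beta$ reduce to $t^2-a^{-1}t+1>0$. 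Interchanging $i$ and $j$ gives \eqref{param_up}. Smoothness of $\beta(t)$ on the relevant interval together with nonvanishing of the velocity vector yields regularity, and continuity on an interval gives connectedness of each component.

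Next I would identify which portion of $\Lambda_k\cap\Sigma$ actually bounds $\Sigma R$ by evaluating the other two curvatures along the curve. Writing $Q:=t^2-a^{-1}t+1$, one gets the factorizations $\lambda_i=t^{-2/3}Q^{-1/3}\bigl(\sqrt{Q}+t-2a\bigr)$ and $\lambda_j=t^{1/3}Q^{-1/3}\bigl(\sqrt{Q}+1-2at\bigr)$. Since $t\le a<1/2$ makes $1-2at>0$, we have $\lambda_j>0$ throughout, while $\lambda_i\ge0\iff Q\ge(2a-t)^2\iff (1-4a^2)(a-t)/a\ge0\iff t\le a$. Hence the bounding arc is exactly $t\in(0,a]$, terminating at the corner where $\lambda_i$ (respectively $\lambda_j$) also vanishes; this pins down the range and the endpoints. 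For part~(2), subtracting the two defining equations gives the clean factorization $\lambda_i-\lambda_j=(x_j-x_i)\bigl(x_k-2a(x_i+x_j)\bigr)$. The branch $x_i=x_j$ forces $x_i=x_j=a^{1/3}$, $x_k=a^{-2/3}$, i.e.\ the point $P_{ij}$; the branch $x_k=2a(x_i+x_j)$, substituted back into $\lambda_i=0$, collapses to $(x_i+x_j)^2(1-4a^2)=0$, which is impossible for $a<1/2$. This gives uniqueness in one stroke. That $I_k$ meets $r_{ij},r_{ji}$ only at $P_{ij}$ is clear because $P_{ij}\in I_k$, and the asymptotic approach of $I_k$ to $r_{ik},r_{jk}$ follows by matching leading-order behavior of the parametrizations as $t\to 0^+$.

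For part~(3) the engine is the pointwise identity $x_jx_k\,f_i=\lambda_j+\lambda_k-2\lambda_i$, valid for all $(x_1,x_2,x_3)$ with $f_i$ as in \eqref{three_equat}. At any singular point $f_1=f_2=f_3=0$, so $\lambda_j+\lambda_k=2\lambda_i$ for every cyclic triple, which forces $\lambda_1=\lambda_2=\lambda_3=:\ell$. Because $r_1\cap r_2\cap r_3=\varnothing$ (a common zero would be some $P_{ij}$, yet there $\lambda_k=a^{-1/3}(1+a-2a^2)>0$), we get $\ell\neq0$, and $\ell>0$ follows from the fact that singular points are Einstein metrics of positive scalar curvature, so that the $\lambda_i$, sharing the sign of the principal Ricci curvatures \eqref{ricci}, are positive. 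For part~(4), inserting \eqref{kaler_curve} into $x_k=x_i+x_j$ reproduces $\phi=(t^2+t)^{-1/3}$ and the constraint $x_1x_2x_3=1$, so $l_k$ is the K\"ahler locus on $\Sigma$; invariance under \eqref{three_equat} is checked by verifying that $f_k-f_i-f_j$ vanishes on $\{x_k=x_i+x_j\}$ precisely when $a=1/6$, and identifying the K\"ahler--Einstein point of $l_k$ as a saddle (via the eigenvalues of the linearization) exhibits $l_k$ as its separatrix.

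The hardest parts, I expect, are two. First, establishing $\ell>0$ at \emph{every} equilibrium uniformly in $a\in(0,1/2)$: the reduction to $\lambda_1=\lambda_2=\lambda_3$ is clean, but rigorously fixing the sign either needs the explicit, $a$-dependent classification of the Einstein metrics or a self-contained positivity argument for the scalar curvature. Second, the asymptotic claims in parts~(1) and~(2): one must show that $I_k$ and the far ends of $r_{ik}$, $r_{jk}$ become mutually asymptotic at infinity rather than merely unbounded, which requires a careful comparison of the leading terms of the three parametrizations as $t\to0$.
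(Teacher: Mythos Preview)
Your proposal is correct, and in parts~(1) and~(3) it takes a route genuinely different from the paper's. For part~(1) both you and the paper obtain $\beta(t)=(t^4-a^{-1}t^3+t^2)^{-1/6}$ by the same substitution $x_i=t x_j$, but you justify the restriction $t\in(0,a]$ intrinsically by checking $\lambda_i,\lambda_j\ge 0$ along $r_k$ (your factorizations $\lambda_i=t^{-2/3}Q^{-1/3}(\sqrt Q+t-2a)$ and $\lambda_j=t^{1/3}Q^{-1/3}(\sqrt Q+1-2at)$ are correct and give $\lambda_i\ge 0\iff t\le a$), whereas the paper first parametrizes on $(0,m)$ via its Lemma on the decomposition $\Lambda_k=\Lambda_{ki}\cup\Lambda_{kj}$ and only afterwards removes the ``tail'' $t\in(a,m)$ once the intersection point with $r_i$ is located; your argument is the more principled one. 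For regularity the paper instead checks transversality of $\nabla V$ and $\nabla\lambda_k$, which is quicker than verifying a nonvanishing velocity. Part~(2) is essentially the same in both (the paper's Lemma~2 already contains the factorization $\lambda_i-\lambda_j=(x_j-x_i)(x_k-2a(x_i+x_j))$, and for the asymptotics it substitutes $x_i=x_j=p$, $x_k=p^{-2}$ into $\lambda_i,\lambda_j$ and sends $p\to\infty$). The real divergence is part~(3): your identity $x_jx_k\,f_i=\lambda_j+\lambda_k-2\lambda_i$ is valid and immediately gives $\lambda_1=\lambda_2=\lambda_3=:\ell$ at every equilibrium, a structural observation the paper does not make; but to conclude $\ell>0$ you must import the positive-scalar-curvature fact for homogeneous Einstein metrics, while the paper simply substitutes the known four families of equilibria $x_1=x_2=x_3=\tau$ and $x_i=\kappa\tau,\ x_j=x_k=\tau$ with $\kappa=(1-2a)/(2a)$ and computes $\lambda_1=\lambda_2=\lambda_3=(1-a)\tau^2$ resp.\ $(1-4a^2)\tau^2/(4a)$ directly---less elegant, but self-contained. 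For part~(4) the paper verifies invariance by checking $(dx_m/dt)/(dx_n/dt)=f_m/f_n$ along the explicit parametrization rather than your equivalent criterion $f_k-f_i-f_j=0$ on $\{x_k=x_i+x_j\}$; the saddle identification is cited from earlier work in both cases.
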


The results of Theorem~\ref{theo24_2}
are illustrated in the right panel of Figure~\ref{s_curves}
for the case $a=1/6$,
where the curves $r_1$, $r_2$ and $r_3$ are depicted respectively
in magenta, aquamarine and burlywood colors,
the curves  $l_1$, $l_2$ and $l_3$ are depicted by cyan colored curves
and  yellow colored points correspond to singular  points of~\eqref{three_equat}.

\bigskip

It should be noted that we will consider only Riemannian metrics
satisfying the unit volume condition $x_1x_2x_3=1$  (see~\cite{AANS1, AN}),
and denote by~$\Sigma$ the  surface defined by equation $V\equiv 1$ with
$V:=x_1x_2x_3$.
In general, surfaces $x_1x_2x_3=c$, where $c>0$,
play the significant role for study~\eqref{three_equat}
on generalized Wallach spaces.
It is known that any set determined by the
equation $x_1x_2x_3=c$
is invariant under~\eqref{three_equat}, moreover $x_1x_2x_3=c$ is its first integral.
Surfaces $x_1x_2x_3=c$ will  also be unstable (or stable) manifolds of~\eqref{three_equat} and contain leading directions of motions of its trajectories (see~\cite{Ab24}).
Since the right hand sides of~\eqref{three_equat}
are all homogeneous, namely
$f_i(cx_1,cx_2,cx_3)=f_i(x_1,x_2,x_3)$ for any $c$,
 we can  pass to a new differential system of the same form as the original one, but
with $\widetilde{x}_1\widetilde{x}_2\widetilde{x}_3=1$.
Actually this is reachable by replacings\, $x_i(t)=\widetilde{x}_i(\tau)\sqrt[3]{c}$ and $t=\tau\sqrt[3]{c}$.
Therefore  without loss of generality
we assume that  the invariant surface  is given by~$V\equiv 1$.

\section{Auxiliary results}

Observe that the expressions for $\gamma_i$ and $\lambda_i$ in \eqref{vali} and \eqref{ricci} are symmetric under the permutations $i\to j\to k\to i$.
Therefore it suffices to consider representatives only at fixed $(i,j,k)$,
where $\{i,j,k\}=\{1,2,3\}$.

\begin{lemma}[\cite{Ab24}]\label{Gamma_Conic}
For each Wallach space in~\eqref{SWS}
the set  $S$ of Riemannian metrics \eqref{metric} with positive sectional curvature is bounded by the pairwise disjoint conic surfaces
 $\Gamma_1$, $\Gamma_2$ and $\Gamma_3$.
\end{lemma}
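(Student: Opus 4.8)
The plan is to read off both claimed properties — conicity and pairwise disjointness — directly from the algebraic structure of the quadratic forms $\gamma_i$ in~\eqref{vali}. By the cyclic symmetry noted at the start of this section it suffices to argue for the representative $(i,j,k)=(1,2,3)$.

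First I would observe that each $\gamma_i$ is homogeneous of degree two in $(x_1,x_2,x_3)$, so its zero set $\Gamma_i$ is invariant under the scaling $x\mapsto cx$ and is therefore a cone with vertex at the origin. Writing $\gamma_1=-3x_1^2+x_2^2+x_3^2+2x_1x_2+2x_1x_3-2x_2x_3$ as a quadratic form and inspecting its associated symmetric matrix, one checks (e.g.\ via the leading principal minors $-3,-4,-4$) that the form is nondegenerate with signature $(2,1)$; hence $\Gamma_1$, and by symmetry each $\Gamma_i$, is a genuine nondegenerate quadric cone, which justifies the term \emph{conic surface}.

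The core of the argument is pairwise disjointness inside the open octant $(0,+\infty)^3$. Here I would compute the difference of two of the forms and factor it, obtaining the identity $\gamma_i-\gamma_j=4(x_j-x_i)(x_i+x_j-x_k)$. Consequently any common zero of $\gamma_i$ and $\gamma_j$ satisfies either $x_i=x_j$ or $x_k=x_i+x_j$. Substituting the first relation into $\gamma_i=0$ gives $\gamma_i|_{x_i=x_j}=x_k^2$, forcing $x_k=0$; substituting the second gives $\gamma_i|_{x_k=x_i+x_j}=4x_ix_j$, forcing $x_ix_j=0$. Both conclusions are impossible when all coordinates are strictly positive, so $\Gamma_i\cap\Gamma_j\cap(0,+\infty)^3=\varnothing$; cyclic symmetry extends this to all pairs, and a fortiori the triple intersection is empty.

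Finally, to see that these cones \emph{bound} $S=\{\gamma_1>0,\gamma_2>0,\gamma_3>0\}$, I would note that $S$ is nonempty (the diagonal point $x_1=x_2=x_3$ yields $\gamma_i=1>0$) and that its topological boundary in $(0,+\infty)^3$ consists of points where at least one $\gamma_i$ vanishes while the others are $\ge 0$. Pairwise disjointness guarantees that exactly one $\gamma_i$ can vanish at such a point, so $\partial S$ splits into three mutually disjoint pieces carried by $\Gamma_1,\Gamma_2,\Gamma_3$. Since the gradient of $\gamma_i$ is nonzero away from the vertex (the cone being nondegenerate), $\gamma_i$ changes sign transversally across $\Gamma_i$, confirming that each $\Gamma_i$ genuinely appears in the boundary. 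The only real obstacle is the disjointness computation; once the factorization $\gamma_i-\gamma_j=4(x_j-x_i)(x_i+x_j-x_k)$ is in hand, everything else is either immediate homogeneity or routine topology.
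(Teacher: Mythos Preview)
Your argument is correct and follows essentially the same route as the paper's sketch: both establish conicity from homogeneity and prove pairwise disjointness by showing that the common solutions of $\gamma_i=0$, $\gamma_j=0$ force some coordinate to vanish. Your factorization $\gamma_i-\gamma_j=4(x_j-x_i)(x_i+x_j-x_k)$ makes this step more transparent than the paper's terse listing of solution families, and your signature computation supplies the nondegeneracy that the paper leaves implicit; conversely, the paper's explicit solution $x_k=\Phi_k(x_i,x_j)$ identifies which sheet of $\Gamma_k$ meets the positive octant, a detail your topological boundary argument bypasses.
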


The cones $\Gamma_1$, $\Gamma_2$ and $\Gamma_3$ are depicted
in the left panel of Figure~\ref{surfaces} in red, teal and blue colors respectively.

Actually Lemma~\ref{Gamma_Conic} was proved in~\cite{Ab24}.
Here we bring the sketch of its proof for convenience of the readers.
The equation  $\gamma_k=0$ defines two connected components
$3x_k= x_i+x_j- 2\sqrt{x_i^2-x_ix_j+x_j^2}$
and
$$
x_k= \Phi_k(x_i,x_j):=3^{-1}\left(x_i+x_j+ 2\sqrt{x_i^2-x_ix_j+x_j^2}\right)
$$
of the cone $\Gamma_k$.
Since the first of them gives  $x_k<0$ for all  $x_i, x_j>0$
then $\gamma_k>0$ is equivalent to
$0<x_k<\Phi_k(x_i,x_j)$
meaning that~$S$ is bounded by the plane  $x_k=0$ and the  positive part~$\Gamma_k$
of the cone~$\gamma_k=0$. By symmetry we have the same
for $\Gamma_i$ and $\Gamma_j$.

Consider the pair $(i,j)$. The equations $\gamma_i=0$ and $\gamma_j=0$
defining the surfaces~$\Gamma_i$ and $\Gamma_j$
can admit only the following two family of common solutions $x_i=x_j$, $x_k=0$
and $x_i=x_k$, $x_j=0$. But we need in positive solutions only.
Hence $\Gamma_i \cap \Gamma_j=\emptyset$ for all  positive $x_1, x_2, x_3$.
By symmetry the same  assertions hold for the pairs $(i,k)$ and $(j,k)$.

\begin{figure}[h]
\centering
\includegraphics[angle=0, width=0.45\textwidth]{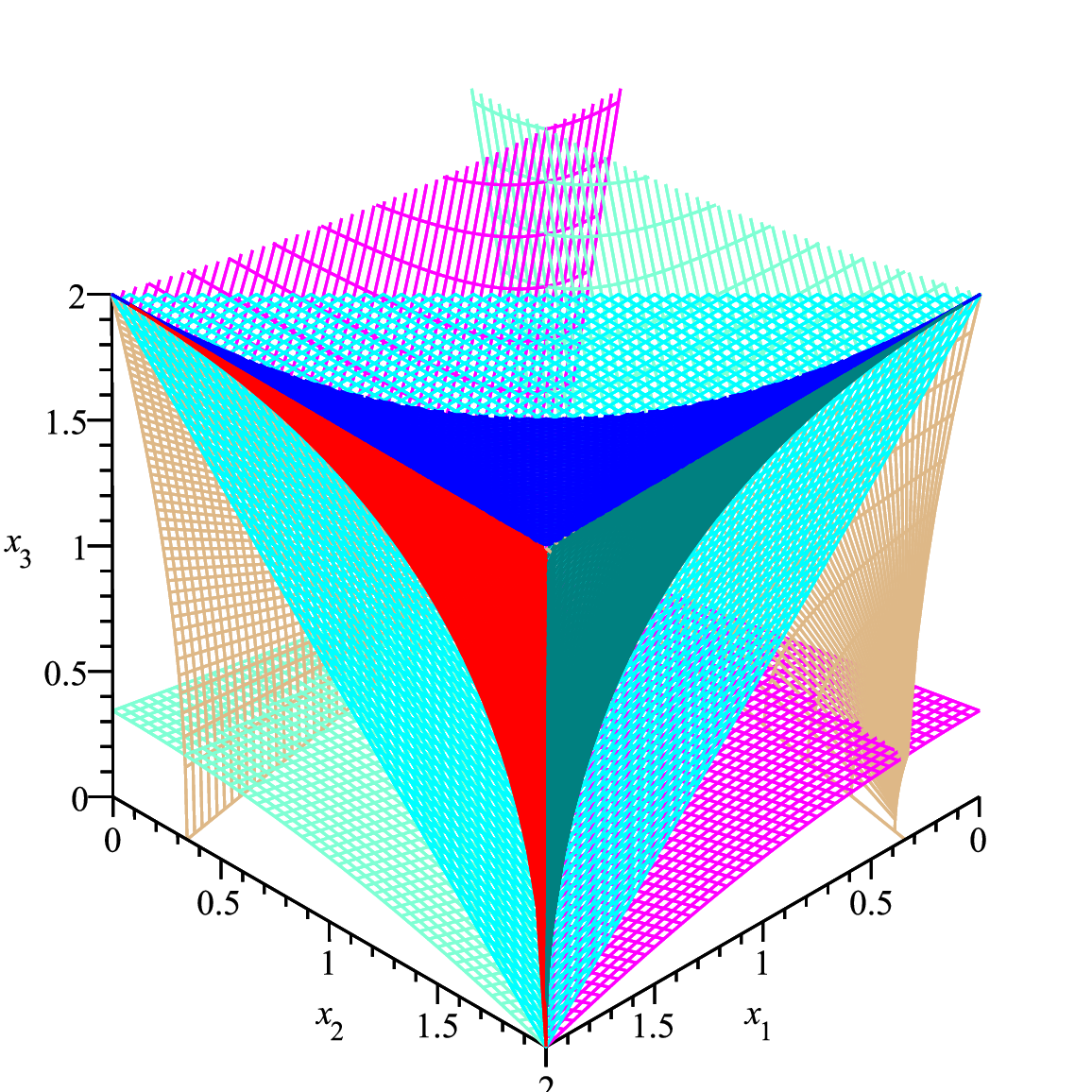}
\includegraphics[angle=0, width=0.45\textwidth]{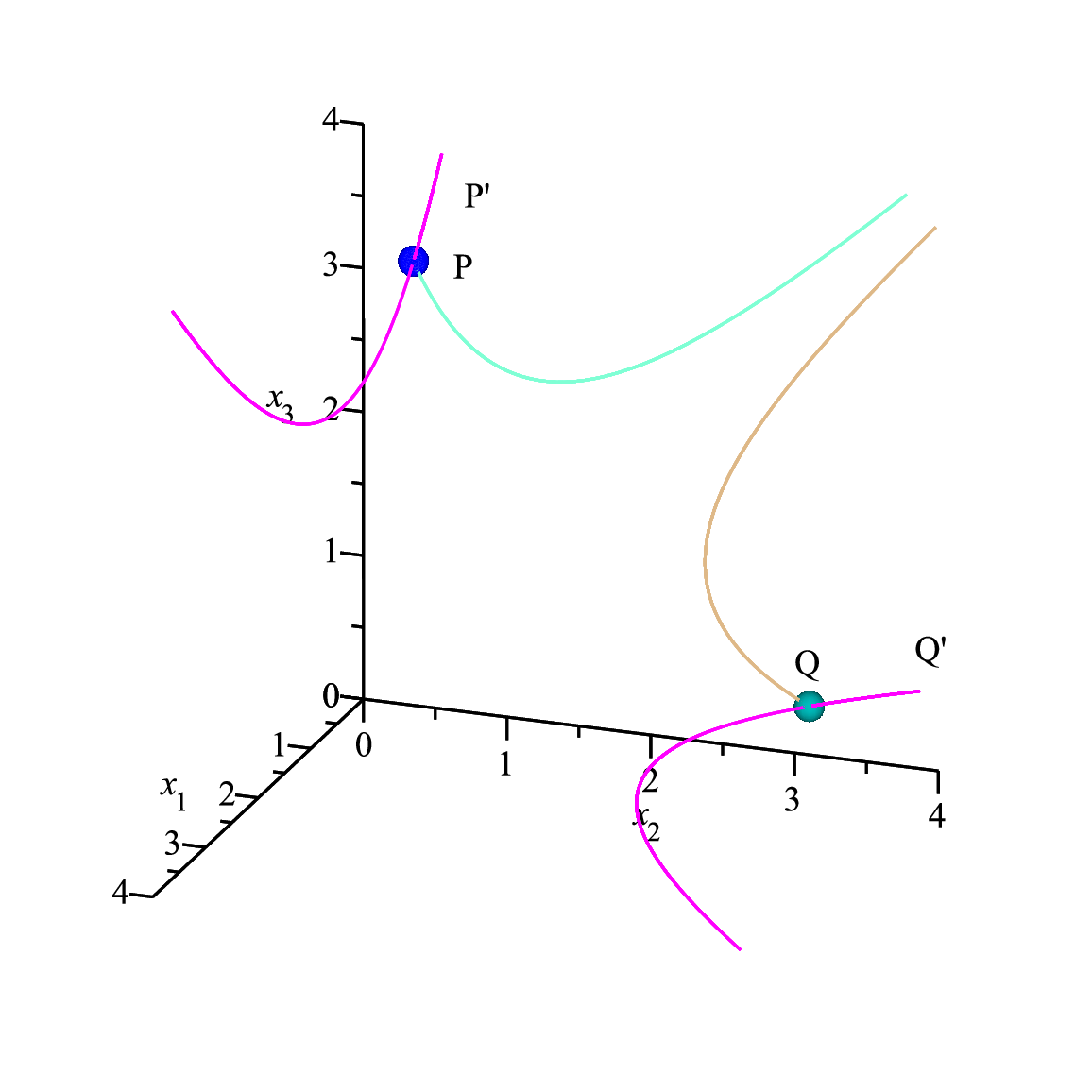}
\caption{The left panel: the cones $\Gamma_1,\Gamma_2, \Gamma_3$,
$\Lambda_1, \Lambda_2, \Lambda_3$ and the planes $x_k=x_i+x_j$ for $\{i,j,k\}=\{1,2,3\}$;
The right panel: crossing $r_2$ and $r_3$ by $r_1$.}
\label{surfaces}
\end{figure}

\begin{lemma}\label{Delta_Conic}
For every  generalized  Wallach space with $a\in (0,1/2)$ the set $R$
is bounded by the  conic surfaces
$\Lambda_1$, $\Lambda_2$ and $\Lambda_3$.
Each pair $\Lambda_i$ and $\Lambda_j$  has intersections along two different straight lines  $x_i=x_j=u$, $x_k=0$ and $x_i=x_j=av$, $x_k=v$, where $u,v>0$.
\end{lemma}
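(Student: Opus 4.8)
The plan is to follow the template used for Lemma~\ref{Gamma_Conic}, treating each defining expression as a quadratic in one variable and then analyzing the pairwise intersections directly. Since each $\lambda_k$ in~\eqref{ricci} is a homogeneous quadratic form in $(x_1,x_2,x_3)$, every surface $\Lambda_k=\{\lambda_k=0\}$ is automatically a cone with vertex at the origin, which is precisely the conic character asserted. It then remains to show that these three cones bound $R$ and to locate $\Lambda_i\cap\Lambda_j$.

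First I would establish the bounding claim. Viewing $\lambda_k$ as a quadratic in $x_k$ gives $\lambda_k=a\,x_k^2+\big(x_ix_j-a x_i^2-a x_j^2\big)$, which has positive leading coefficient $a>0$ and, notably, no linear term in $x_k$; hence $\lambda_k=0$ has the two roots $x_k=\pm\sqrt{x_i^2+x_j^2-x_ix_j/a}$ whenever the radicand is nonnegative. The negative root yields $x_k<0$ and is irrelevant in the positive octant, exactly as the negative branch was discarded in the sketch of Lemma~\ref{Gamma_Conic}. Consequently, inside $(0,+\infty)^3$ the inequality $\lambda_k>0$ is equivalent to $x_k>\sqrt{x_i^2+x_j^2-x_ix_j/a}$ where the radicand is nonnegative, and holds for every $x_k>0$ where it is negative; in either case the only nontrivial part of the boundary of $\{\lambda_k>0\}$ is the positive part $\Lambda_k$ of the cone. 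Intersecting over $k=1,2,3$ shows that $\partial R\cap(0,+\infty)^3\subseteq\Lambda_1\cup\Lambda_2\cup\Lambda_3$, which is exactly the statement that $R$ is bounded by the three cones.

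For the intersection claim I would compute the pair $(i,j)$ directly. Subtracting the two defining equations yields the factorization $\lambda_i-\lambda_j=(x_j-x_i)\big(x_k-2a(x_i+x_j)\big)$, so every common zero of $\lambda_i$ and $\lambda_j$ satisfies either $x_i=x_j$ or $x_k=2a(x_i+x_j)$. In the first case, putting $x_i=x_j=u$ into $\lambda_i=0$ collapses it to $x_k(u-a x_k)=0$, giving the line $x_i=x_j=u$, $x_k=0$ and the line $x_k=u/a$; writing $v:=u/a$ turns the latter into $x_i=x_j=av$, $x_k=v$, which are precisely the two lines of the statement, and they are distinct since $x_k=0$ on one while $x_k=v>0$ on the other.

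It remains to rule out the branch $x_k=2a(x_i+x_j)$, and this is where the hypothesis $a\in(0,1/2)$ is essential and where I expect the only genuine obstacle to lie. Substituting $x_k=2a(x_i+x_j)$ into $\lambda_i=0$ and simplifying reduces the equation to $a(x_i+x_j)^2\big(1-4a^2\big)=0$; for $x_i+x_j>0$ and $a\in(0,1/2)$ each factor is strictly positive, so this branch contributes no points of $\Lambda_i\cap\Lambda_j$ (it would reappear only in the degenerate case $a=1/2$). The two lines found above are therefore the entire intersection, and by the permutation symmetry $i\to j\to k\to i$ noted before the lemma the same conclusions hold for the pairs $(j,k)$ and $(k,i)$, completing the argument.
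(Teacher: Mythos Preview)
Your proof is correct and follows essentially the same template as the paper: treat $\lambda_k$ as a quadratic in $x_k$ to identify the positive cone bounding $\{\lambda_k>0\}$, then analyze $\Lambda_i\cap\Lambda_j$ by factoring a linear combination. The only notable variation is in the intersection step: the paper factors \emph{both} $\lambda_i-\lambda_j=(x_i-x_j)\big(x_k-2a(x_i+x_j)\big)$ and $\lambda_i+\lambda_j=x_k\big(x_i+x_j-2ax_k\big)$ and combines the four cases, whereas you factor only the difference and then substitute each branch back into $\lambda_i=0$, obtaining $x_k(u-ax_k)=0$ on the branch $x_i=x_j$ and $a(x_i+x_j)^2(1-4a^2)>0$ on the branch $x_k=2a(x_i+x_j)$. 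Both routes are elementary and equally short; the paper's version avoids the explicit simplification on the second branch, while yours makes the role of the hypothesis $a<1/2$ (via $1-4a^2>0$) slightly more transparent.
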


The cones $\Lambda_1$, $\Lambda_2$ and $\Lambda_3$ are depicted
in the left panel of Figure~\ref{surfaces} in  magenta, aquamarine and burlywood colors respectively.

\medskip

\begin{proof}[Proof of Lemma~\ref{Delta_Conic}]
Consider the surface $\Lambda_k$.
Since $D:=x_i^2-a^{-1}x_ix_j+x_j^2$ is symmetric with respect to $x_i$ and $x_j$
it  can be considered as a quadratic polynomial in~$x_j$
without loss of generality.
Then
$D\le 0$ if $mx_i\le x_j\le Mx_i$ and
$D>0$ if $0<x_j<mx_i$ or $x_j>Mx_i$,
where
\begin{equation}\label{m_and_M}
m:=\left(1-\sqrt{1-4a^2}\right)(2a)^{-1}>0, \quad
M:=\left(1+\sqrt{1-4a^2}\right)(2a)^{-1}>0.
\end{equation}
Depending on the sign  of~$D$
the inequality  $\lambda_k>0$  admits the positive solution
$x_k> \sqrt{D}$
if $D>0$ and any $x_k>0$ can satisfy  $\lambda_k>0$ if $D\le 0$.
This means that besides the planes $x_1=0$, $x_2=0$  and $x_3=0$ the set~$R$
is bounded by two disjoint connected components $\Lambda_{kj}$
and $\Lambda_{ki}$ of the surface $\Lambda_k=\Lambda_{ki}\cup \Lambda_{kj}$
defined by the same equation
$$
x_k=\Psi(x_i,x_j):=\sqrt{x_i^2-a^{-1}x_ix_j+x_j^2}
$$
but on different domains
$\left\{(x_i,x_j)\in \mathbb{R}^2~ \big| ~ x_i>0, \, 0<x_j<mx_i\right\}$
and
$\left\{(x_i,x_j)\in \mathbb{R}^2~ \big| ~ x_i>0, \, x_j>Mx_i\right\}$
respectively.

Due to  symmetry the same properties hold for
the surfaces $\Lambda_i$ and $\Lambda_j$ as well.
Thus $\partial (R)=\Lambda_1\cup \Lambda_2 \cup \Lambda_3$.

By the same reason of symmetry it suffices to analyze $\Lambda_i \cap \Lambda_j$ only.
Assume that some triple $(x_1,x_2,x_3)$ satisfies both of
 $\lambda_i=0$ and $\lambda_j=0$.
Then $\lambda_i-\lambda_j=0$ and $\lambda_i+\lambda_j=0$ imply
$(x_i-x_j)\big(x_k-2a(x_i+x_j)\big)=0$ and $x_k(x_i+x_j-2ax_k)=0$.
In what follows that the system of equations $\lambda_i=0$ and $\lambda_j=0$
can admit only the following two different families of one-parametric solutions $x_i=x_j=u$, $x_k=0$ and
\begin{equation}\label{common_lines}
x_i=x_j=av, \quad x_k=v
\end{equation}
with  parameters  $u,v>0$.
\end{proof}

\section{Proofs of the main results}

\begin{proof}[Proof of Theorem \ref{theo24_1}]
$(1)$
{\it The curves $s_1, s_2, s_3$ are pairwise disjoint and
 bound the set $\Sigma S$}\/ because so are
the corresponding cones  $\Gamma_1, \Gamma_2, \Gamma_3$
and $\partial(S)=\Gamma_1\cup \Gamma_2\cup \Gamma_3$
according to Lemma~\ref{Gamma_Conic}.

\smallskip

{\it Parameterizations of the curves $s_1$, $s_2$ and $s_3$.}
Due to symmetry fix any unordered triple $(i,j,k)$.
Putting $x_k=x_i^{-1}x_j^{-1}$ in $\gamma_k=0$
we obtain the following  polynomial equation of degree $6$ in two variables $x_i$ and $x_j$:
 $$
 x_i^4x_j^2-2x_i^3x_j^3+x_i^2x_j^4+2x_i^2x_j+2x_ix_j^2-3=0.
 $$
 After then substituting  $x_i=tx_j$,  $x_j=\sqrt[3]{u}$ into the obtained equation and solving it  with respect to~$u$ we can get a parametric
representation of the curve $s_k$ given in~\eqref{param_s2024}.
It is expected its two different roots
$u_{1,2}:=\left(-t-1 \pm 2\sqrt{t^2-t+1}\right)
t^{-1}(t-1)^{-2}$, $t>0$, $t\ne 1$,
but the second of these roots taken with the minus sign gives only negative values
of~$x_i$ and $x_j$.
Denote  $\widetilde{\alpha}(t)=\sqrt[3]{u_1(t)}>0$.

Note also that $\lim_{t\to 0+}\widetilde{\alpha}(t)=+\infty$ and
$\lim_{t\to +\infty}\widetilde{\alpha}(t)=0$.
This yields
$\lim_{t\to +\infty}x_j(t)=0$, $\lim_{t\to +\infty}x_i(t)=\lim_{t\to +\infty}x_k(t)=+\infty$
and $\lim_{t\to 0+}x_i(t)=0$, $\lim_{t\to 0+}x_j(t)=\lim_{t\to 0+}x_k(t)=+\infty$.

\smallskip

{\it Connectedness of the curves $s_1$, $s_2$ and $s_3$.}
Note that $\lim_{t\to 1+}\widetilde{\alpha}(t)=\lim_{t\to 1-}\widetilde{\alpha}(t)=p_0:=\sqrt[3]{6}/2$.
Hence assigning $\alpha(1)=p_0$  we define on $G:=(0,+\infty)$ a continuous function
$\alpha \colon G\rightarrow G$
$$
\alpha(t)=
 \begin{cases}
\widetilde{\alpha}(t), &\mbox{if~~} t>0, ~t\ne 1,\\
p_0, &\mbox{if~~} t=1.
\end{cases}
$$

Therefore in the standard topology of $\mathbb{R}^3$ the set (curve)
$s_k=F(G)\subset G^3$ must be connected  as a continuous image of the connected set $G$
under a function  $F\colon G\rightarrow G^3$
with  continuous coordinate components  $x_i, x_j, x_k\colon G\rightarrow G$ such that
$x_i(t)=t\alpha (t)$, $x_j(t)=\alpha(t)$ and $x_k(t)=t^{-1}\alpha(t)^{-2}$.

\smallskip

{\it Smoothness of the curves $s_1$, $s_2$ and $s_3$}
can be proved using~\eqref{param_s2024}.
But we prefer another  way. Due to symmetry it suffices
to prove smoothness of the curve $s_i=\Sigma\cap \Gamma_i$.
Since  $\Sigma$ and $\Gamma_i$ are smooth (regular) surfaces
it remains to show that their intersection is transversal, in other words
their gradient vectors
$\nabla V=(x_2x_3,x_1x_3,x_1x_2)=\big(x_1^{-1}, x_2^{-1}, x_3^{-1}\big)$ and
$\nabla \gamma_i=\left(\gamma_{i1}, \gamma_{i2}, \gamma_{i3}\right)$ are linearly independent
along $s_i$, where
$$
\gamma_{ij}:=\dfrac{\partial \gamma_i}{\partial x_j}
=\begin{cases}
x_i+x_j-x_k, &\mbox{if~~} j\ne i,\\
-3x_{i}+x_j+x_k, &\mbox{if~~} j=i,
\end{cases}
$$
for $i,j\in \{1,2,3\}$.
Due to symmetry fix any $i$ and suppose by contrary that $\nabla \gamma_i=c\, \nabla V$ for some  real $c\ne 0$.
This means that the equalities  $\gamma_{ij}=c x_j^{-1}$
hold for  $j\in {1,2,3}$.
Then for $j\ne i$ and $k\ne i$ we obtain
 equalities
$$
(x_i+x_j-x_k)\, x_j=(x_i+x_k-x_j)\, x_k=c
$$
equivalent to $(x_j-x_k)(x_i+x_j+x_k)=0$ which is impossible
for  $x_i\ne x_j\ne x_k\ne x_i$.
Actually we proved the more strong fact that  the normal vectors
$\nabla V$ and $\nabla \gamma_i$ are linearly independent
not only along $s_i$, but everywhere where the surfaces
$\Sigma$ and  $\Gamma_i$ are defined excepting points $(x_1,x_2,x_3)$
with non positive or coincided components.

\smallskip

$(2)$  Due to symmetry  it suffices to take the  invariant curve $I_k$ of~\eqref{ricciflow}
defined as $x_i=x_j=p$, $x_k=p^{-2}$, $p>0$.
Consider the curve $s_k$.
The question is will $I_k$ cross the curve  $s_k$ or not.
It suffices to answer this question for~$I_k$ and the surface $\Gamma_k$
because existing of a point $Z$ in $(0,+\infty)^3$ such that  $Z\in I_k\cap \Gamma_k$
implies  $Z\in I_k\subset \Sigma$ and hence   $Z\in \Sigma \cap \Gamma_k=s_k$.
Thus substituting $x_i=x_j=p$, $x_k=p^{-2}$ into the equation $\gamma_k=0$ of $\Gamma_k$
we obtain the equation
$\big(4p^3-3\big)p^{-4}=0$  which admits the single root $p=p_0=\sqrt[3]{6}/2$ providing the unique common point $x_i=x_j=p_0$, $x_k=p_0^{-2}$ of~$I_k$ with~$s_k$.

Consider now any curve $s_i$ such that $i\ne k$.
Then we obtain an incompatible system
of equations $x_i=x_j=p$, $x_k=p^{-2}$ and $\gamma_i=0$ because of
$\gamma_i=p^{-4}\ne 0$.
Moreover, $s_i$ asymptotically tends to~$I_k$ as $p\to +\infty$ according  to
$\lim_{p\to +\infty}\gamma_i=\lim_{p\to +\infty}p^{-4}=0$.
The same result holds for $s_j$ by symmetry in the equation of~$I_k$.
Theorem \ref{theo24_1} is proved.
\end{proof}

\bigskip

\begin{proof}[Proof of Theorem \ref{theo24_2}]
$(1)$ Clearly $\partial(\Sigma R)=r_1\cup r_2\cup r_3$ directly follows
from $\partial(R)=\Lambda_1\cup \Lambda_2\cup \Lambda_3$ proved in
Lemma~\ref{Delta_Conic}.
Intersecting both of the connected components
$\Lambda_{ki}$ and $\Lambda_{kj}$ of the cone  $\Lambda_k$
the surface~$\Sigma$ forms  components $r_{ki}=\Sigma \cap \Lambda_{ki}$ and
$r_{kj}=\Sigma \cap \Lambda_{kj}$
of the curve $r_k$ such that $r_k=r_{ki} \cup r_{kj}$ and
  $r_{ki} \cap r_{kj}=\emptyset$.

\smallskip

{\it Smoothness of the components of  $r_1$, $r_2$ and $r_3$}.
Consider the curve $r_k$.
We claim that the
gradient vectors
$\nabla V=\big(x_1^{-1}, x_2^{-1}, x_3^{-1}\big)$ and
$\nabla \lambda_k=\left(\lambda_{k1}, \lambda_{k2}, \lambda_{k3}\right)$
of the surfaces $\Sigma$ and $\Lambda_k$ are linearly independent
for all positive  $x_1,x_2,x_3$ such that $x_1\ne x_2\ne x_3\ne x_1$,
where
$$
\lambda_{kj}:=\dfrac{\partial \lambda_k}{\partial x_j}
=\begin{cases}
x_i-2ax_j, &\mbox{if~~} j\ne k,\\
2ax_{k}, &\mbox{if~~} j=k,
\end{cases}
$$
for $k,j\in \{1,2,3\}$.
Indeed supposing $\nabla \lambda_k=c\, \nabla V$, where $c$ is a nonzero real number,
we obtain immediately an unreachable equality $(x_j-x_i)(x_j+x_i)=0$.
In what follows that each component~$r_{k1}$  and $r_{k2}$
of the curve~$r_k$ is  a smooth curve as a transversal intersection of two smooth surfaces.

\smallskip

{\it Connectedness of the components of $r_1, r_2, r_3$.}
The variable $x_k$ can be eliminated  from the system of equations $x_ix_jx_k=1$ and $\lambda_k=0$
to obtain the equation
$$
a\left(x_i^4x_j^2+x_i^2x_j^4\right)-x_i^3x_j^3-a=0
$$
of the projection of the curve $r_k$
onto the coordinate plane $(x_i,x_j)$.
By the same way as in Theorem~\ref{theo24_1}
substituting $x_i=tx_j$,  $x_j=\sqrt[3]{u}$ into the last equation
and solving it with respect to~$u$ we obtain a parametric
equation
$$
x_k=t^{-1}\beta^{-2}, \quad x_i= t\beta, \quad x_j=\beta
$$
of the curve $r_k$,
where
$$
\beta=\beta(t):=\left(t^2(t-m)(t-M)\right)^{-\frac{1}{6}}.
$$

The function $\beta(t)$ is
well defined, continuous and positive valued for $t\in (0,m)$ or $t\in (M,+\infty)$,
where $m$ and $M$ are given in~\eqref{m_and_M}, $0<m<M$.
It follows then the components $r_{ki}$ and $r_{kj}$  of~$r_k$ are
respectively continuous  images of the connected sets $(0,m)$
and $(M,+\infty)$  under a vector-function  with
 coordinates $x_i(t)$, $x_j(t)$ and $x_k(t)$.
Therefore $r_{ki}$ and $r_{kj}$ are connected too.

Note  that the components  $r_{ki}$ and $r_{kj}$
are symmetric under the permutation $i\to j\to i$.
Therefore we can parameterize them on the same interval
but using different formulas
\eqref{param_low} and \eqref{param_up} respectively.
For simplicity we choose the interval $(0,m)$.

 \smallskip

{\it Intersections of $r_1$, $r_2$ and $r_3$}.
Consider the pair  $r_1$ and $r_2$.
By Lemma~\ref{Delta_Conic}  the surfaces $\Lambda_1$ and $\Lambda_2$  meet each other along
the straight line $x_1=x_2=av$, $x_3=v$ (see~\eqref{common_lines}).
This line intersects the surface $\Sigma$
at a unique point $P_{12}$. Indeed substituting
$x_1=x_2=av$, $x_3=v$ into $x_1x_2x_3=1$ we get the unique value
$v=v_0:=a^{-2/3}$.
This yields coordinates $\big(a^{1/3},a^{1/3}, a^{-2/3}\big)$  of $P_{12}$.
Note that $P_{12}$ (the point $P$ in the right panel of Figure~\ref{surfaces})
is also the only intersection point of the curves $r_1$ and $r_2$
(their components $r_{12}$ and $r_{21}$).

Now a value of $t$ at which  $P_{12}$  belongs to $r_1$
can be found from the parametric representation
$$
x_1(t)=t^{-1}\beta(t)^{-2},  \quad x_2(t)=t\beta(t), \quad x_3(t)= \beta(t)
$$
of $r_{12}$.
The condition $x_1=x_2$ implies an equation $t^{-1}\beta^{-2}= t\beta$
which has the single root $t_0=a$ for all $a\in (0,1/2)$.
Therefore the curve  $r_1$  passes through $P_{12}$ at $t=t_0$ only.
It should be noted that  the curves $r_1$  and $r_2$ leave extra pieces after crossing each other.
In principle, we can preserve them, but
it is advisable to remove them for greater clarity of pictures.
Analyzing the values of limits
$$
\lim_{t\to 0+}x_2(t)=0,~~ \lim_{t\to 0+}x_1(t)=\lim_{t\to 0+}x_3(t)=+\infty
$$
and
$$
\lim_{t\to m-}x_1(t)=0, ~~
\lim_{t\to m-}x_2(t)=\lim_{t\to m-}x_3(t)=+\infty
$$
we conclude that the tail $PP'$ corresponds to values $t\in (a,m)$.
Therefore  the original interval of parametrization $(0,m)$ can be reduced
to the interval  $(0,a]$ shown in the text of Theorem~\ref{theo24_2}.

By symmetry the analysis of the  pairs $r_1 \cap r_3$ and $r_2\cap r_3$
(points in teal and red color in the right panel of Figure~\ref{s_curves})
will be the same using permutations of the indices $\{i, j, k\}=\{1,2,3\}$.
For example, the equations
$x_1(t)=t^{-1}\beta(t)^{-2}$,  $x_2(t)=\beta(t)$ and $x_3(t)= t\beta(t)$
define another connected component $r_{13}$  of the curve $r_1$ (which intersects~$r_3$)
on the same interval $(0,a]$. Then coordinates
$\big(a^{1/3},a^{-2/3}, a^{1/3}\big)$ of the point $P_{13}$
(in fact $\{P_{13}\}=r_{13} \cap r_{31}$)
can be obtained at the same boundary value $t=a$ (the point~$Q$ in the right panel of Figure~\ref{surfaces}).
Analogously at $t\in (a,m)$ we get  the part $QQ'$  of $r_{13}$.

\smallskip

$(2)$
Without loss of generality consider the  invariant curve $I_k$.
As noted in Theorem~\ref{theo24_1}
it suffices to consider the surfaces $\Lambda_i$
instead of the corresponding curves $r_i$.
The curve $I_k$ crosses both of the curves $r_i$ and $r_j$
(the components~$r_{ij}$ and $r_{ji}$)
exactly at their common point $P_{ij}$
because substituting $x_i=x_j=p$,  $x_k=p^{-2}$ into $\lambda_i=0$ and $\lambda_j=0$
yields the equations
$$
\lambda_i=\lambda_j=\big(p^3-a\big)p^{-4}=0
$$
which admit a single root $p=a^{1/3}$ corresponding to $P_{ij}$.
Therefore $I_k\cap r_{ij}\cap r_{ji}=\{P_{ij}\}$.

At the same time $I_k$
approximates both of $r_i$  and $r_j$ (their components~$r_{ik}$ and $r_{jk}$) at infinity.
Indeed
$$
\lim_{p\to +\infty}\lambda_i=\lim_{p\to +\infty}\lambda_j=
\lim_{p\to +\infty}\big(p^3-a\big)p^{-4}=0.
$$

For the curve $r_{k}$ we have
$$
\lambda_k=(1-2a)p^2+p^{-4}> 0
$$
under the same substitutions.
Therefore $I_k$ never cross $r_k$, moreover, $\lim_{p\to +\infty}\lambda_k=+\infty$.

\smallskip

$(3)$
As it follows from \cite{AANS1, Lomshakov2}
the system of algebraic equations
$f_i(x_1,x_2,x_3)=0$
has the following four families of one-parametric solutions
for every $a\in (0,1/2)\setminus \{1/4\}$:
\begin{equation}\label{lines_equilib}
x_1= x_2= x_3=\tau,   \qquad
x_i=\tau\kappa, ~ x_j=x_k=\tau,   \quad \tau>0, \quad  \{i,j,k\}=\{1,2,3\},
\end{equation}
 where
\begin{equation}\label{kappa}
\kappa:=\frac{1-2a}{2a}.
\end{equation}
At $a=1/4$ these families merge to the unique family $x_1=x_2=x_3=\tau$.

Substituting  $x_i=\tau\kappa$ and $x_j=x_k=\tau$ into
the expressions \eqref{ricci} for $\lambda_1, \lambda_2$ and $\lambda_3$ we obtain
$$
\lambda_1=\lambda_2=\lambda_3=\frac{1-4a^2}{4a}\,\tau^2>0
$$
because $a\in (0,1/2)$.
Obviously,
$$
\lambda_1=\lambda_2=\lambda_3=(1-a)\,\tau^2>0
$$
at $x_1= x_2= x_3=\tau$.
Therefore the straight lines  \eqref{lines_equilib} lye in the set $R$
for all $a\in (0,1/2)$. They cross the invariant surface~$\Sigma$
at the points (see also~\cite{Ab24})
$$
\bold{o}_0:= (1, 1, 1), \quad \bold{o}_1:=\left(q\kappa, q, q\right),
\quad \bold{o}_2:= \left(q, q\kappa, q\right), \quad
\bold{o}_3:=\left(q, q, q \kappa\right),
$$
being the singular points of the system \eqref{three_equat} on $\Sigma$,  where
$q:=\sqrt[3]{\kappa^{-1}}$
(at $a=1/4$ the unique singular point $(1,1,1)$ is obtained
according to \eqref{kappa}).
Thus we conclude that  $\bold{o}_i\in \Sigma R$ for every
$a\in (0,1/2)$ and $i\in \{0,1,2,3\}$.

\smallskip

$(4)$
According to~\cite{Ab7, AANS1, AANS2}
the curves $I_1$, $I_2$ and $I_3$ are
separatrices of the unique saddle point  $\bold{o}_0$  (of the {\it linear zero type})
of the system \eqref{three_equat} if $a=1/4$.
For $a\in (0,1/2)\setminus \{1/4\}$
the points $\bold{o}_1, \bold{o}_2, \bold{o}_3$ are all  {\it hyperbolic type}\/ saddles
and $\bold{o}_0$ is a stable (respectively unstable) {\it hyperbolic}\/ node
if $1/4<a<1/2$ (respectively $0<a<1/4$).
Additionally, each invariant curve $I_k$  is one of two
separatrices of the  saddle $\bold{o}_k$ (see~\cite{Ab24}).
At $a=1/6$ we have an opportunity
to find analytically the second separatrice of each $\bold{o}_k$
different from $I_k$, where  $k=1,2,3$.
Indeed it is easy to see that coordinates of~$\bold{o}_k$
satisfy the system of equations
\begin{equation}\label{kaler_sep}
\aligned
\begin{cases}
x_k=x_i+x_j, \\
x_1x_2x_3=1,
\end{cases}
\endaligned
\end{equation}
where  $x_k=x_i+x_j$
represents  K\"{a}hler metrics
on a given generalized Wallach space $G/H$  with $a_1=a_2=a_3=a=1/6$,
$\{i,j,k\}=\{1,2,3\}$  (see also~\cite{AN}).
Therefore each saddle~$\bold{o}_k$
belongs to a curve~$l_k$ obtained  as an intersection of the invariant surface $\Sigma$ with the plane $x_k=x_i+x_j$
(the curves $l_1, l_2, l_3$ are depicted in the right panel of Figure~\ref{s_curves} in cyan color for all indices $\{i,j,k\}=\{1,2,3\}$).

Repeating similar procedures as in the case of the curves $s_k$ and $r_k$
we can obtain parametric equations~\eqref{kaler_curve}
of the curves $l_k$.
Clearly, $l_i\cap l_j=\emptyset$ for $i\ne j$.
We claim that each  of $l_1, l_2, l_3$  is also an invariant curve of the differential system~\eqref{three_equat}.
To show it consider  the case $k=3$ only due to symmetry.
Substitute the parametric representation
$x_1=\phi$, $x_2=t\phi$,  $x_3=t^{-1}\phi^{-2}$ of the curve $l_3$
into $f_1,f_2$ and $f_3$ in~\eqref{three_equat}, where
$\phi=\phi(t):=(t^2+t)^{-1/3}$, $t>0$.
Then
\begin{eqnarray*}
f_1=-\frac{2}{9}\, \frac{(2t+1)(t-1)}{t(t+1)}, \qquad
f_2=\frac{2}{9}\, \frac{(t+2)(t-1)}{t+1}, \qquad
f_3=\frac{2}{9}\, \frac{(t-1)^2}{t}.
\end{eqnarray*}
The value $t=1$ providing $f_1=f_2=f_3=0$ gives a   stationary trajectory,
namely it is the singular point $\bold{o}_3=\left(q, q, q \kappa\right)$ itself.
Thus assume $t\ne 1$.
The identities
\begin{eqnarray*}
\dfrac{dx_2}{dx_1}&=&\frac{\dfrac{dx_2}{dt}}{\dfrac{dx_1}{dt}}\equiv
\dfrac{(t\phi)'}{\phi'}=-\dfrac{(t+2)t}{2t+1}=\dfrac{f_2}{f_1}, \\ \dfrac{dx_3}{dx_1}&=&\dfrac{\big(t^{-1}\phi^{-2}\big)'}{\phi'}=
-\dfrac{t^2-1}{2t+1}=\dfrac{f_3}{f_1},\\
\dfrac{dx_3}{dx_2}&=&\dfrac{\big(t^{-1}\phi^{-2}\big)'}{(t\phi)'}=
\dfrac{t^2-1}{t(t+2)}=\dfrac{f_3}{f_2}
\end{eqnarray*}
imply that  $l_3$
is  a trajectory of~\eqref{three_equat}
for $t>0$ and $t\ne 1$.
Moreover, $l_3$ passes through the singular point~$\bold{o}_3$ as noted above.
This means that $l_3$ is a separatrice of~$\bold{o}_3$.
Invariancy of the curves $l_1$ and $l_2$ respectively
passing through $\bold{o}_1$ and $\bold{o}_2$
can be proved by the same way.
Theorem~\ref{theo24_2} is proved.
\end{proof}

\begin{remark}
 As  it was noted in the proof of Theorem~\ref{theo24_2}
 the equations~\eqref{param_low} define for $t\in (M,+\infty)$
 the same curve as~\eqref{param_up} for $t\in (0,m)$.
In the case $t\in (M,+\infty)$ the tail removing procedure leads
to the equation $t^{-1}\beta^{-2}=\beta$
equivalent to $at^2-(a^2+1)t+a=0$.
Its first root $t=a$ corresponds to the point $P_{ki}$
and the second root $t=1/a$ gives the point $P_{kj}$.
Obviously $0<a<m<M<1/a$ for all $a\in (0,1/2)$.
Therefore, in principle, both components of each curve $r_k$ can be parameterized
by one formula, say~\eqref{param_low}, but using the different intervals $(0,a]$ and  $[1/a,+\infty)$.
\end{remark}

\begin{remark}
We proved  that all singular points
$\bold{o}_0, \bold{o}_1, \bold{o}_2$ and $\bold{o}_3$
of the normalized Ricci flow on generalized Wallach spaces with
$a_1=a_2=a_3=a$
belong to the set
$\Sigma R$ of metrics with positive Ricci curvature.
Unfortunately a similar assertion does not hold for the
set~$\Sigma S$.  Lemma~3 in~\cite{Ab24} shows that there exists a critical value $a=3/14$ such that $\bold{o}_1, \bold{o}_2, \bold{o}_3\in  \Sigma S$
only if $a\in (3/14, 1/2)$ and  the
boundary cases $\bold{o}_i\in  s_i$ ($i=1,2,3$) hold if $a=3/14$.
The only generalized Wallach spaces which admit metrics with
positive sectional curvature are the Wallach spaces~\eqref{SWS} which satisfy the condition $a\in (0,3/14)$.
\end{remark}

\medskip

\begin{remark} The case $a=1/6$ is original,
where K\"{a}hler metrics provide separatrices of saddles~$\bold{o}_i$ as it illustrated
in Figure~\ref{s_curves}.
For $a \ne 1/6$ it is a difficult problem to find
similar separatrices analytically.
Knowing all separatrices allows to
predict the dynamics of the Ricci flow in more detail.
To demonstrate the main idea consider
an arbitrarily chosen  singular point in the case $a=1/6$, say
$\bold{o}_3=\big(2^{-1/3}, 2^{-1/3}, 2^{2/3} \big)$ (the K\"{a}hler-Einstein metric) and observe that the curve~$l_3$ defined by the equations $x_3=x_1+x_2$ and $x_1x_2x_3=1$ coincides with  the unstable manifold~$W_3^u$
of~$\bold{o}_3$ as it was shown in~\cite{Ab24}.
The  stable manifold of $\bold{o}_3$
is $W_3^s:=\big\{(x_1,x_2,x_3)\in \mathbb{R}^3~ \big|~ x_3=p^{-2},\, x_1=x_2=p, ~ 0<p<1\big\}\subset I_3$.
It is clear now that controlling by~$W_i^s$ and $W_i^u$  trajectories of~\eqref{three_equat} never can leave the domain bounded by the curves~\eqref{kaler_sep}.
This explains the fact proved in Theorem~4 in~\cite{AN}
that Riemannian metrics~\eqref{metric}
on generalized Wallach spaces with $a=1/6$ (on the Wallach space $\operatorname{SU}(3)/T_{\max}$ in particularly)
preserve the positivity of their Ricci curvature for $x_k<x_i+x_j$ ($\{i,j,k\}=\{1,2,3\}$).
In Figure~\ref{bime} the separatrices $l_1, l_2, l_3$ and some trajectories
of~\eqref{three_equat} are depicted  for illustrations.
\end{remark}

\begin{figure}[h]
\centering
\includegraphics[angle=0, width=0.90\textwidth]{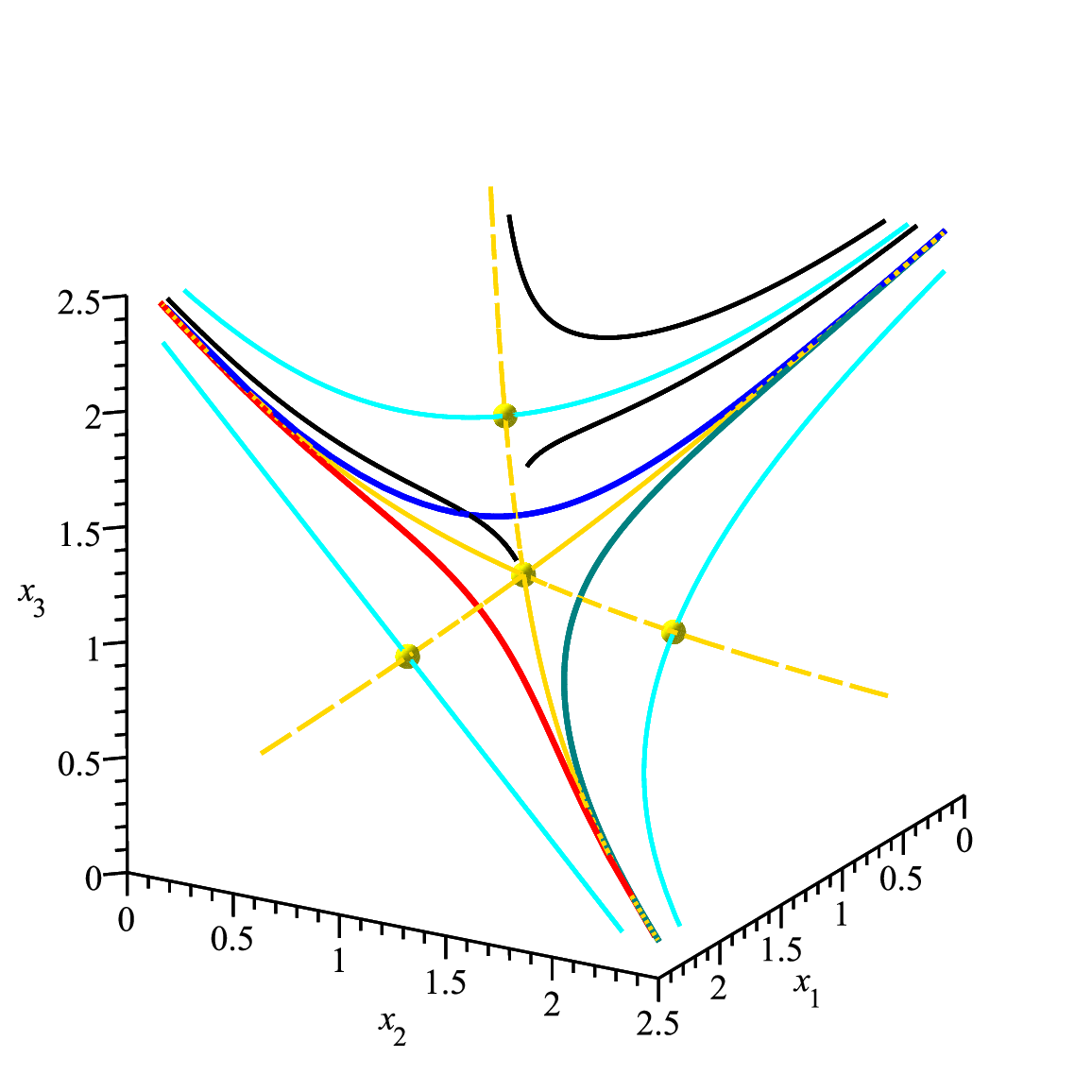}
\caption{The separatrices $l_1, l_2, l_3$ (in cyan color), $I_1, I_2, I_3$ (in yellow color) and  trajectories of~\eqref{three_equat} (in black color).}
\label{bime}
\end{figure}

\section{Additional remarks}

i) The well known fact  that the positivity of the Ricci curvature
follows from the positivity of the sectional curvature
can be  justified and illustrated via inclusion~$S\subset R$, where
$S$ is depicted in Figure~\ref{surfaces} as a set bounded by three cones in  red, teal and blue colors,
respectively~$R$ is bounded by six conic surfaces in magenta, aquamarine and burlywood colors.

To establish $S\subset R$ for all  $a\in (0,1/2)$
it suffices to show the inclusion~$\partial(S)\subset R$.
We will follow this opportunity since trying to establish $S\subset R$ directly
leads to  pairs of inequalities of the kind $\gamma_i>0$ and  $\lambda_i>0$
whose analysis  is much more complicated than to deal with the system consisting of
one equation $\gamma_i=0$
and one inequality  $\lambda_i>0$:
\begin{equation}\label{syst}
\aligned
\begin{cases}
(x_j-x_k)^2+2x_i(x_j+x_k)-3x_i^2=0, \\
x_jx_k+a\,\big(x_i^2-x_j^2-x_k^2\big)>0.
\end{cases}
\endaligned
\end{equation}

By symmetry fix any $i\in \{1,2,3\}$
and consider the component $\Gamma_i$ of the boundary of $S$.
Every point of the cone~$\Gamma_i$ belongs to some its generator line
$x_i=\nu t$, $x_j=\mu t$, $x_k=t$, $t>0$ (see also~\cite{Ab24}),
where
$\mu=1-\nu+2\sqrt{\nu(\nu-1)}>0$, $\nu>1$.
Indeed generators satisfy the equation in~\eqref{syst}
and the inequality in~\eqref{syst}
takes the form
$(X-Y)\,t^2>0$
with
$X:= \big(4a(\nu -1)+2\big)\sqrt{\nu(\nu-1)}$ and
$Y:=4a\nu^2+(1-6a)\nu +2a-1$.

Obviously $X>0$ for all $a\in (0,1/2)$ and $\nu>1$.
Since     $Y=0$ has roots $\nu_1=\frac{2a-1}{4a}<0$ and $\nu_2=1$
the inequality $Y>0$ holds as well at  $\nu>1$.
Thus  $X-Y>0$ is  equivalent to
$X^2-Y^2= (\nu-1)\, p(\nu)>0$, where
$p(\nu) =8a\nu^2-(2a+3)(2a-1)\nu+(2a-1)^2$
admits  two different negative roots
$\nu_1=\frac{2a-1}{16a} \left(2a+3+\sqrt{(2a-1)(2a-9)}\right)$
and $\nu_2=\frac{2a-1}{16a} \left(2a+3-\sqrt{(2a-1)(2a-9)}\right)$
for every $a\in (0,1/2)$.
It follows then  $p(\nu)>0$ (hence $X^2-Y^2>0$) at $\nu>1$ independently on $a\in (0,1/2)$.
This means that  $\lambda_i>0$ for any point  of $\Gamma_i$
which is equivalent to   $\Gamma_i \subset R$.
Since~$i$ was chosen arbitrarily we obtain
 $\partial(S)=\Gamma_1\cup \Gamma_2 \cup \Gamma_3 \subset R$ and hence $S\subset R$ with the obvious
consequence $\Sigma S \subset \Sigma R$.

\medskip

ii) There are useful asymptotical representations for practical aims.
For instance, at $t\to 0$ the expressions
$$
x_2(t)=t^{2/3}+\mathcal{O}\big(t^{8/3}\big), \quad
x_1(t)=x_3(t)=t^{-1/3}+\mathcal{O}\big(t^{5/3}\big)
$$
are valid for coordinates of points of the curve $s_3$ defined
as a variety of solutions of the system
\begin{equation}\label{plan_s}
\aligned
\begin{cases}
(x_1-x_2)^2+2x_3(x_1+x_2)-3x_3^2=0, \\
x_1x_2x_3=1.
\end{cases}
\endaligned
\end{equation}

For $t$ tending to $0$  the curve
$
r_1:
\aligned
\begin{cases}
x_2x_3+a\left(x_1^2-x_2^2-x_3^2\right)=0, \\
x_1x_2x_3=1
\end{cases}
\endaligned
$
has a similar asymptotic
 $$
x_2(t)=t^{2/3}+\frac{t^{5/3}}{6a}+\mathcal{O}\big(t^{8/3}\big), \quad
x_1(t)=t^{-1/3}+\mathcal{O}\big(t^{5/3}\big),
\quad
x_3(t)=t^{-1/3}+\frac{t^{2/3}}{6a}+\mathcal{O}\big(t^{5/3}\big)
$$
in accordance with the fact that  $s_3$ and $r_{12}\subset r_1$
approach the same invariant curve $I_2$ at infinity.

\medskip

iii) Often it is easier to deal with a planar analysis of the dynamics of the normalized Ricci flow.
Choose the coordinate plane $x_3=0$ without loss of generality. Then the projection of the set~ $\Sigma S$ of Riemannian metrics  with positive sectional curvature
onto the plane $x_3=0$
can be  bounded by the following plane curves $s_1', s_2'$ and $s_3'$ defined implicitly
\begin{eqnarray*}
3x_1^4x_2^2-2x_1^3x_2^3-x_1^2x_2^4-2x_1^2x_2+2x_1x_2^2-1&=&0,\\
3x_2^4x_1^2-2x_2^3x_1^3-x_2^2x_1^4-2x_2^2x_1+2x_2x_1^2-1&=&0,\\
x_1^4x_2^2-2x_1^3x_2^3+x_1^2x_2^4+2x_1^2x_2+2x_1x_2^2-3&=&0.
\end{eqnarray*}
For example the equation of $s_3'$ can be obtained eliminating $x_3$ in the system~\eqref{plan_s}.

Analogously, boundary curves of the projection of the set $\Sigma R$ of Ricci positive metrics onto the plane $x_3=0$ have equations
\begin{eqnarray*}
ax_1^4x_2^2-ax_1^2x_2^4+x_1x_2^2-a&=&0,\\
ax_2^4x_1^2-ax_2^2x_1^4+x_2x_1^2-a&=&0,\\
ax_1^4x_2^2+ax_1^2x_2^4-x_1^3x_2^3-a&=&0.
\end{eqnarray*}

Projections of the K\"{a}hler metrics
$x_1=x_2+x_3$, $x_2=x_1+x_3$ and $x_3=x_1+x_2$ will be defined by
$x_1x_2(x_1-x_2)=1$,\, $x_1x_2(x_2-x_1)=1$ and $x_1x_2(x_1+x_2)=1$ respectively.

We recommend to compare  the pictures demonstrated in this paper with planar pictures  depicted in the right panels of Figures~3, 6 and 7 obtained in \cite{AN} in the coordinate plane $(x_1,x_2)$.

\bigskip

The author is grateful to Prof. Yu.\,G.~Nikonorov for helpful discussions.

\vspace{10mm}

\bibliographystyle{amsunsrt}

\begin{thebibliography}{[99]}



\bibitem{Ab_kar3}
{\sl Abiev N.A.} On classification of degenerate singular points of Ricci flows.
Bull. Karaganda Univ.-Math.
(2015), V.~79, No.~3, P.~3--11.


\bibitem{Ab2} {\sl Abiev N.A.} On topological structure of some sets related to
the  normalized Ricci flow on generalized Wallach spaces.
Vladikavkaz. Math. J. (2015), V.~17, No.~3, P.~5--13.


\bibitem{Ab1}  {\sl Abiev N.A.} Two-parametric bifurcations of singular points of
the normalized Ricci flow on generalized Wallach spaces.
AIP Conference Proceedings. (2015), V.~1676, 020053, P.~1--6.


\bibitem{Ab7}
{\sl Abiev N.\,A.}
On the evolution of invariant Riemannian metrics on one class
of generalized Wallach spaces under the influence of the normalized Ricci flow.
Matem. tr. (2017), V.~20, No.~1. P.~3--20 (Russian); English translation in: Siberian Adv. Math. (2017), V.~27, No.~4, P.~227--238.

\bibitem{Ab24}
{\sl Abiev N.\,A.}
On the dynamics of a three-dimensional  differential system related to the normalized Ricci flow on generalized Wallach spaces, \url{https://arxiv.org/abs/2312.09706}  (Preprint).

\bibitem{AANS1}
{\sl Abiev N.\,A., Arvanitoyeorgos A., Nikonorov Yu.\,G., Siasos P.}
The dynamics of the Ricci flow on generalized Wallach spaces.
Diff. Geom. Appl. (2014), V.~35, Supplement, P.~26--43.


\bibitem{AANS2}
{\sl Abiev N.\,A., Arvanitoyeorgos A., Nikonorov Yu.\,G., Siasos P.}
The Ricci flow on some generalized Wallach spaces. In: V. Rovenski, P. Walczak (eds.).
Geometry and its Applications. Springer Proceedings in Mathematics \&  Statistics, V.~72,
Switzerland:~Springer, 2014, VIII+243~p.,~P.~3--37.


\bibitem{AN}
{\sl Abiev~N.\,A.,  Nikonorov~Yu.\,G.}
The evolution of positively curved invariant Riemannian metrics on the Wallach spaces under the Ricci flow. Ann.~Glob.~Anal.~Geom. (2016), V.~50, No.~1. P.~65--84.


\bibitem{AW}
{\sl Aloff~S., Wallach~N.}
An infinite family of 7--manifolds admitting positively curved Riemannian structures. (1975),
Bull. Amer. Math. Soc., 81, 93--97.



\bibitem{Bat}
{\sl Batkhin~A.\,B., Bruno~A.\,D. } Investigation of a real
algebraic surface. Program. Comput. Software (2015),
V.~41, No.~2. P.~74--83.

\bibitem{Bat2}
{\sl Batkhin~A.\,B.} A real variety with boundary
and its global parameterization. Program. Comput. Software (2017),
V.~43, No.~2. P.~75--83.

\bibitem{BB}
{\sl B\'erard Bergery~L.}
Les vari\'et\'es riemanniennes homog\`enes simplement connexes de dimension impaire
\`a courbure strictement positive.  J.\ Math.\ Pures Appl. (1976), V.~55, P.~47--68.

\bibitem{Be}
{\sl Berger~M.} Les vari\'et\'es riemanniennes homog\`enes normales simplement connexes \`a courbure strictment positive.
Ann. Scuola Norm. Sup. Pisa (1961), V.~15, P.~191--240.



\bibitem{Bo}
{\sl B\"ohm~C., Wilking~B.}
Nonnegatively curved manifolds with finite fundamental groups admit metrics with positive Ricci curvature. GAFA Geom. Func. Anal. (2007), V.~17, P.~665--681.


\bibitem{ChWal}{\sl Cheung~Man-Wai, Wallach~N.\,R.}
Ricci flow and curvature on the variety of flags on the two dimensional projective space over the complexes, quaternions and octonions. Proc. Amer. Math. Soc.  (2015), V.~143, No.~1, P.~369--378.

\bibitem{Ham}
{\sl Hamilton~R.\,S.} Three-manifolds with positive Ricci curvature.
J. Diff. Geom. (1982), V.~17, P.~255--306.

\bibitem{Lomshakov2}
{\sl Lomshakov~A.\,M., Nikonorov~Yu.\,G., Firsov~E.\,V.} Invariant
Einstein metrics on three-locally-symmetric spaces. Matem. tr. (2003), V.~6, No.~2.P.~80--101 (Russian); English translation in: Siberian Adv. Math. (2004), V.~14, No.~3, P.~43--62.


\bibitem{Nikonorov2}{\sl Nikonorov~Yu.\,G.} On a class of homogeneous compact
 Einstein manifolds. Sibirsk. Mat. Zh. (2000), V.~41, No.~1, P.~200--205 (Russian);
 English translation in: Siberian Math. J. (2000), V.~41, No.~1, P.~168--172.

\bibitem{Nikonorov4}{\sl Nikonorov~Yu.\,G.} Classification of generalized Wallach spaces.
Geom. Dedicata (2016), V.~181, No~1. P.~193--212; correction: Geom. Dedicata (2021), V.~214,
No~1. P.~849–851.

\bibitem{Nikonorov1}{\sl Nikonorov~Yu.\,G., Rodionov~E.\,D., Slavskii~V.\,V.}
Geometry of homogeneous Riemannian manifolds. Journal of Mathematical Sciences (New York) (2007), V.~146, No.~7, P.~6313--6390.




\bibitem{Rod}{\sl Rodionov~E.\,D.}
Einstein metrics on even-dimensional homogeneous spaces admitting a~homogeneous Riemannian metric of positive sectional curvature, Sibirsk. Mat. Zh. (1991), V.~32, No.~3, P.126--131 (Russian); English translation in: Siberian Math. J. (1991), V.~32, No.~3, P.~455--459.



\bibitem{Stat}{\sl Statha~M.}
Ricci flow on certain homogeneous spaces,
Ann. Global Anal. Geom. (2022), V.~62, No.~1, P.93--127.



\bibitem{Valiev}
{\sl Valiev~F.\,M.}
Precise estimates for the sectional curvature of homogeneous Riemannian metrics on Wallach spaces.
Sib. Mat. Zh. (1979), V.~20, P.~248--262 (Russian);
English translation in: Siberian Math. J. (1979), V.~20, P.~176--187.



\bibitem{Wal}
{\sl Wallach~N.} Compact homogeneous Riemannian manifolds with
strictly positive curvature. Annals of Mathematics, Second Series. (1972), V.~96, No.2, P.~277--295.

\bibitem{Wil}
{\sl Wilking~B.}
The normal  homogeneous space $SU(3)\times SO(3)/U(2)$ has  positive sectional curvature.
Proc. Am. Math. Soc. (1999), V.~127, No. 4, P.~1191--1194.

\bibitem{WiZi}
{\sl Wilking~B., Ziller~W.}
Revisiting homogeneous spaces with positive curvature.
Journal Reine Angewandte Math. (2018), V.~738, P.~313--328.


\bibitem{XuWolf}
{\sl Xu~M., Wolf~J.\,A.} $Sp(2)/U(1)$ and a positive curvature problem.
Differ. Geom. Appl. (2015), V.~42, P.~115--124.


\end{thebibliography}

\vspace{5mm}

\end{document}